\newtheorem{conjecture}{Conjecture}[section]
\newtheorem{theorem}{Theorem}[section]
\newtheorem{lemma}[theorem]{Lemma}
\newtheorem{proposition}[theorem]{Proposition}
\newtheorem{corollary}[theorem]{Corollary}
\theoremstyle{definition}
\theoremstyle{definition}
\theoremstyle{remark}
\newtheorem{remark}[theorem]{Remark}
\numberwithin{equation}{section}
\def\XXint#1#2#3{{
\setbox0=\hbox{$#1{#2#3}{\int}$}
\vcenter{\hbox{$#2#3$}}\kern-.5\wd0}}
\DeclarePairedDelimiter\floor{\lfloor}{\rfloor}
\begin{document}
\title{Jacobi polynomials on the Bernstein ellipse} 
\author{Haiyong Wang\footnotemark[1]~\footnotemark[2]~ and Lun Zhang\footnotemark[3] }

\maketitle
\renewcommand{\thefootnote}{\fnsymbol{footnote}}

\footnotetext[1]{School of Mathematics and Statistics, Huazhong
University of Science and Technology, Wuhan 430074, P. R. China.
E-mail: \texttt{haiyongwang@hust.edu.cn}}

\footnotetext[2]{Hubei Key Laboratory of Engineering Modeling and
Scientific Computing, Huazhong University of Science and Technology,
Wuhan 430074, P. R. China.}

\footnotetext[3]{School of Mathematical Sciences and Shanghai Key
Laboratory for Contemporary Applied Mathematics, Fudan University,
Shanghai 200433, P. R. China. E-mail:
\texttt{lunzhang@fudan.edu.cn}}

\begin{abstract}
In this paper, we are concerned with Jacobi polynomials
$P_n^{(\alpha,\beta)}(x)$ on the Bernstein ellipse with motivation
mainly coming from recent studies of convergence rate of spectral
interpolation. An explicit representation of
$P_n^{(\alpha,\beta)}(x)$ is derived in the variable of
parametrization. This formula further allows us to show that the
maximum value of $\left|P_n^{(\alpha,\beta)}(z)\right|$ over the
Bernstein ellipse is attained at one of the endpoints of the major
axis if $\alpha+\beta\geq -1$. For the minimum value, we are able to
show that for a large class of Gegenbauer polynomials (i.e.,
$\alpha=\beta$), it is attained at two endpoints of the minor axis.
These results particularly extend those previously known only for
some special cases. Moreover, we obtain a more refined asymptotic
estimate for Jacobi polynomials on the Bernstein ellipse.

\end{abstract}

{\bf Keywords:} spectral method, Jacobi polynomials, Bernstein
ellipse,  extrema, asymptotic estimate

\vspace{0.05in}

{\bf AMS classifications:} 65N35, 65D05, 41A05, 41A25.

\section{Introduction}\label{sec:introduction}
Spectral collocation method is a classical and powerful tool to
solve integral and differential equations.
Suppose that the equation is defined on a finite interval $[-1,1]$,
the basic idea of this approach is to approximate the solution of
the equation by its polynomial interpolant of the form
\begin{align}
f(x) \approx p_{n}(x) = \sum_{k=1}^{n} f(x_{k}) \ell_k(x), \qquad -1
\leq x \leq 1,
\end{align}
where $\{ x_{k} \}_{k=1}^{n}$ is a set of distinct nodes and
\[
\ell_k(x) = \prod_{\substack{j=1 \\ j\neq k}}^{n} \frac{x -
x_{j}}{x_{k} - x_{j}}, \qquad 1 \leq k \leq n,
\]
are the Lagrange fundamental polynomials. The values $\{ f(x_k)
\}_{k=1}^{n}$ are determined by requiring that the interpolant
$p_n(x)$ satisfies the equation exactly at the nodes $\{ x_k
\}_{k=1}^{n}$. To ensure rapid convergence of the spectral
collocation method, the interpolation nodes $\{ x_k \}_{k=1}^{n}$
with the distribution of density $(1-x^2)^{-1/2}$ are preferable,
and the ideal candidates are the zeros or extrema of classical
orthogonal polynomials such as Gegenbauer polynomials, or more
generally, the Jacobi polynomials; cf.
\cite{hussaini2006spectral,shen2011spectral,trefethen2000spectral}.
The interpolation procedure described above is also known as
spectral interpolation.

As is well known, the accuracy of spectral interpolation depends on
the regularity of the underlying function $f(x)$, with exponential
rate if $f(x)$ is analytic in a neighborhood containing the interval
$[-1,1]$; we refer to
\cite{reddy2005accuracy,tadmor1986exponential,wang2012convergence,wang2014superconvergence,xie2013exponential,xiang2010error,zhang2012superconvergenc}
for relevant results and \cite{wang2014explicit} for fast
implementation.
To this end, it is worthwhile to recall that the starting point of
these proofs is the so-called Hermite integral formula. More
precisely, let $\mathcal{E}_{\rho}$ be the Bernstein ellipse:
\begin{equation}\label{def:Bernstein}
 \mathcal{E}_{\rho} =  \left\{ z \in \mathbb{C} ~~\bigg| ~~z =
 \frac{1}{2} \left( u + u^{-1} \right),~~ u = \rho e^{i \theta},~~ \rho\geq1,~  ~0 \leq
\theta  < 2\pi  \right\}.
\end{equation}
The Bernstein ellipse $\mathcal{E}_{\rho}$ has the foci at $\pm 1$
with the major and minor semi-axes given by
$\frac{1}{2}(\rho+\rho^{-1})$ and $\frac{1}{2}(\rho-\rho^{-1})$,
respectively. Suppose that $f(x)$ is analytic on and within
$\mathcal{E}_{\rho}$ for some $\rho>1$, it follows from the Hermite
integral formula \cite[Theorem 3.6.1]{davis1975interpolation} that
\begin{align}\label{eq:hermite integral formula}
f(x) - p_n(x) = \frac{1}{2\pi i} \oint_{\mathcal{E}_{\rho}}
\frac{\omega_n(x) f(z)}{\omega_n(z) (z-x)} dz,
\end{align}
where $$\omega_n(x) = d_n (x-x_1)(x-x_2)\cdots(x-x_n)$$ with $d_n$ being a
positive normalization constant.
This in turn implies that
\begin{align}\label{eq:bound for interpolant}
| f(x) - p_n(x) | \leq \frac{M L(\mathcal{E}_{\rho} )}{2\pi d}
\max_{\substack{x\in[-1,1] \\ z \in \mathcal{E}_{\rho} }} \left|
\frac{ \omega_n(x) }{ \omega_n(z) } \right|,
\end{align}
where $M = \max_{z\in \mathcal{E}_{\rho}} |f(z)|$,
$L(\mathcal{E}_{\rho} )$ denotes the length of the circumference of
$\mathcal{E}_{\rho}$, and $d$ is the distance from
$\mathcal{E}_{\rho}$ to the interval $[-1,1]$.

For polynomial interpolation at the Jacobi points, i.e., the nodes
$\{x_k\}_{k=1}^n$ are the roots of $n$-th Jacobi polynomial
$P_{n}^{(\alpha,\beta)}(x)$ ($\alpha,\beta>-1$) and the polynomial
$\omega_n(z)$ in \eqref{eq:hermite integral formula} is taken to be
$P_{n}^{(\alpha,\beta)}(z)$, the properties of $P_{n}^{(\alpha,\beta)}(x)$ on the Bernstein ellipse are essential in the
analysis of convergence rate. For the Gegenbauer polynomials $C_n^{\lambda}(z)$
(corresponding to the special cases
$\alpha=\beta=\lambda-\frac{1}{2}$ of the Jacobi polynomials), by
noting that (see \cite[Lemma 3.1]{xie2013exponential})
\begin{equation}\label{eq:XWZexpli}
C_n^{\lambda}(z)=\sum_{k=0}^n g_{k}^\lambda g_{n-k}^\lambda
u^{n-2k},  \quad z=\frac{1}{2}\left(u+u^{-1}\right), \quad n\geq 0,
\end{equation}
where
$$g_0^\lambda=1, \qquad g_k^\lambda=\binom{k+\lambda-1}{k}=\frac{\Gamma(k+\lambda)}{k!\Gamma(\lambda)}, \qquad 1\leq k\leq n,$$
the following asymptotic estimate of Gegenbauer polynomials on the
Bernstein ellipse is obtained by Xie, Wang and Zhao in \cite[Theorem
3.2]{xie2013exponential}: there exists $0<\varepsilon \leq
\frac{1}{2}$ such that
\begin{align}\label{eq:estXWZ}
\left| (1 - u^{-2})^{-\lambda} -
\frac{C_n^{\lambda}(z)}{g_n^{\lambda} u^n} \right| \leq
A(\rho,\lambda) n^{\varepsilon - 1} + \mathcal{O}(n^{-1}), \quad
z\in\mathcal{E}_{\rho},
\end{align}
for $\rho>1$, $\lambda>-\frac{1}{2}$ and $\lambda\neq0$, where
\[
A(\rho,\lambda) = |1-\lambda| \left| (1-\rho^{-2})^{-\lambda} - 1
\right|.
\]
The estimate \eqref{eq:estXWZ} plays an important role in the
rigorous proofs of exponential convergence of Gegenbauer
interpolation and spectral differentiation conducted in
\cite{xie2013exponential}. Later, in a paper regarding
superconvergence of Jacobi-Gauss type spectral interpolation
\cite{wang2014superconvergence}, Wang, Zhao and Zhang have made use
of the following estimate of the lower bound for Jacobi polynomial
on the Bernstein ellipse:
\begin{align}\label{eq:WZZ}
\min_{z\in \mathcal{E}_{\rho}} \left|P^{(\alpha,\beta)}_n (z)\right| \geq
C(\rho;\alpha,\beta) n^{-\frac{1}{2}} \rho^{n+1} ( 1 + \mathcal{O}(n^{-1})),
\end{align}
where $C(\rho;\alpha,\beta) = \min_{|u|=\rho}
|\phi_0(u;\alpha,\beta)|$ is a constant independent of $n$, and the
function $\phi_0(u;\alpha,\beta)$ is regular for $|u|=\rho>1$, and
$|u|=1$ but $u\neq \pm 1$; see
\cite[Equation~(4.7)]{wang2014superconvergence}. This result follows
directly from the asymptotic formula of Jacobi polynomials
\cite[Theorem~8.21.9]{szego1939orthogonal}. We note that, however,
except for the very special cases like $\alpha=\beta=-\frac{1}{2}$,
the explicit form of $C(\rho;\alpha,\beta)$ is not available.

We also note that there is a close connection between polynomial
interpolation and the potential theory \cite[Chapter
5]{trefethen2000spectral}. More specifically, let us define the
discrete potential function associated with the nodes
$\{x_k\}_{k=1}^{n}$ by
\begin{align*}
E_n(z) = \frac{1}{n} \sum_{k=1}^{n} \log|z - x_k|.
\end{align*}
This function is harmonic in the complex plane except at
$\{x_k\}_{k=1}^{n}$ and can be viewed as the potential generated by
all $\{x_k\}_{k=1}^{n}$ if each $x_k$ is interpreted as a point
charge of strength $1/n$ and the repulsion is inverse-linear.
Clearly, we have $|\omega_n(z)|=e^{n E_n(z)}$. Thus, if we choose
$x_k$ to be the zeros of $P_n^{(\alpha,\beta)}(x)$, then the extrema
of $|P_n^{(\alpha,\beta)}(z)|$ implies the extrema of the
corresponding potential $E_n(z)$ as well.

It is the aim of the present research to conduct more complete
studies of Jacobi polynomials on the Bernstein ellipse, including
the explicit formula, extrema of the absolute value and the
asymptotic estimate. Our main contributions are listed below:
\begin{itemize}
\item An explicit formula of $P_n^{(\alpha,\beta)}(x)$ is derived in the variable of parametrization, which generalizes \eqref{eq:XWZexpli} valid for the Gegenbauer case. 
\item The extrema of $\left|P_n^{(\alpha,\beta)}(z)\right|$ on the Bernstein ellipse $\mathcal{E}_{\rho}$ are identified under some assumptions on the parameters. We show that the maximum value is attained at one of the endpoints of the major axis if $\alpha+\beta\geq -1$. This particularly extends \cite[Theorem 4.5.1]{ismail1998classical} established by Ismail, which is valid for the Gegenbauer polynomials. For the minimum value, we are able to show that for a large class of Gegenbauer polynomials, it is attained at two endpoints of the minor axis.
\item We provide a more refined and computable asymptotic estimate as well as a lower bound for the Jacobi polynomials on the Bernstein ellipse, which generalizes \eqref{eq:estXWZ} concerning the Gegenbauer polynomials. 
\end{itemize}


The rest of this paper is organized as follows. We first give a
brief review of Jacobi polynomials in Section \ref{sec:property},
which includes some basic properties that will be used later.
Section \ref{sec:Jacobi ellipse} is devoted to the explicit
representation of Jacobi polynomials on the Bernstein ellipse. 
The extrema of Jacobi polynomials on the Bernstein ellipse are
discussed in Section \ref{sec:JacobiExtrema}. The identification of
maximum value relies on a three-term recurrence relation for the
coefficients arising in the explicit formula.
For the minimum value, we first deal with the Chebyshev polynomials
of the first and second kinds and then extend the results to
Gegenbauer polynomials. The asymptotic estimate and the lower bound
of Jacobi polynomials on the Bernstein ellipse are presented in
Section \ref{sec:JacobiBound}. 
\section{Some properties of Jacobi polynomials}
\label{sec:property}
In this section, we collect some basic properties of Jacobi
polynomials which will be used in the subsequent analysis. All these
properties can be found in the classical book of Szeg\H{o} \cite{szego1939orthogonal}.

Let $P_{n}^{(\alpha,\beta)}(x)$ denote the Jacobi polynomial of
degree $n$, which is defined explicitly by
\begin{align}\label{def:jacob poly}
P_{n}^{(\alpha,\beta)}(x) = 2^{-n} \sum_{k=0}^{n}
\binom{n+\alpha}{n-k} \binom{n+\beta}{k} (x-1)^k (x+1)^{n-k}, \quad
\alpha,\beta>-1.
\end{align}
The Jacobi polynomials are orthogonal over $[-1,1]$ with respect to
the weight function $(1-x)^{\alpha} (1+x)^{\beta} $, that is,
\begin{align}
\int_{-1}^{1} (1-x)^{\alpha} (1+x)^{\beta} P_{n}^{(\alpha,\beta)}(x)
P_{m}^{(\alpha,\beta)}(x) dx =  h_n^{(\alpha,\beta)}\delta_{m,n},
\end{align}
where $\delta_{m,n}$ is the Kronecker delta and
\[
h_n^{(\alpha,\beta)} = \frac{2^{\alpha+\beta+1}}{2n+\alpha+\beta+1}
\frac{\Gamma(n+\alpha+1)\Gamma(n+\beta+1)}{\Gamma(n+\alpha+\beta+1)n!}.
\]
From the explicit formula \eqref{def:jacob poly}, it is easily seen that
\begin{align}\label{eq:JacPol}
P_{n}^{(\alpha,\beta)}(x) = k_n^{(\alpha,\beta)} x^n + \cdots,
\end{align}
where the leading coefficient $k_n^{(\alpha,\beta)}$ is given by
\begin{equation}\label{eq:kn}
 k_n^{(\alpha,\beta)} = \frac{1}{2^n} \binom{2n+\alpha+\beta}{n}=\frac{\Gamma(2n+\alpha+\beta+1)}{2^n n! \Gamma(n+\alpha+\beta+1)}.
\end{equation}
Let $q := \max\{\alpha,\beta\}$ with $\alpha,\beta>-1$. The maximum of $\left|P_{n}^{(\alpha,\beta)}(x)\right|$ on the interval $[-1,1]$ is given by (see \cite[Theorem 7.32.1]{szego1939orthogonal})
\begin{align}\label{eq:max}
\max_{x\in[-1,1]}\left|P_{n}^{(\alpha,\beta)}(x)\right| =
\left\{\begin{array}{cc}
                                          {\displaystyle \binom{n+q}{n}}, & \mbox{if $q \geq -\frac{1}{2}$}, \\ [15pt]
                                          {|P_{n}^{(\alpha,\beta)}(\tilde{x})|}, & \mbox{if $q <
                                          -\frac{1}{2}$},
                                        \end{array}
                                        \right.
\end{align}
where $\tilde{x}$ is one of the two maximum points nearest
$(\beta-\alpha)/(\alpha+\beta+1)$. Indeed, when $q \geq
-\frac{1}{2}$, the maximum of
$\left|P_{n}^{(\alpha,\beta)}(x)\right|$ is attained at one of the
endpoints $\{-1,1\}$. 

When $\alpha=\beta$, the Jacobi polynomials are, up to some positive constants, also known as Gegenbauer
(or the ultraspherical) polynomials $C_n^{\lambda}(x)$. More precisely, we have
\begin{align}\label{eq:gegenbauerdef}
C_n^{\lambda}(x) =
\frac{\Gamma(\lambda+\frac{1}{2})}{\Gamma(2\lambda)}
\frac{\Gamma(n+2\lambda)}{\Gamma(n+\lambda+\frac{1}{2})}
P_n^{(\lambda-\frac{1}{2},\lambda-\frac{1}{2})}(x), \qquad
\lambda>-\frac{1}{2}.
\end{align}
The orthogonality of Gegenbauer polynomials reads
$$\int_{-1}^{1} C_m^{\lambda}(x) C_n^{\lambda}(x) (1-x^2)^{\lambda-1/2}dx = h_n^{\lambda} \delta_{m,n},$$
where
$h_n^{\lambda}=\frac{2^{1-2\lambda}\pi \Gamma(n+2\lambda)}{\Gamma^2(\lambda)n!(n+\lambda)}.$
Since the weight function $(1-x^2)^{\lambda-1/2}$ is an even function, it is readily seen
that following symmetry relations hold:
\begin{align}\label{eq:symmetry relation}
C_{n}^{\lambda}(x) = (-1)^n C_{n}^{\lambda}(-x), \quad n\geq 0.
\end{align}
Thus, $C_{n}^{\lambda}(x)$ is an even function for even $n$ and an
odd function for odd $n$.

The Chebyshev polynomials of the first and second kinds are
\begin{equation*}
T_n(\cos\theta) = \cos(n\theta), \qquad U_n(\cos\theta) = \frac{\sin((n+1)\theta)}{\sin\theta},\quad n\geq 0,
\end{equation*}
respectively. When $z\in \mathcal{E}_{\rho} $, the Chebyshev polynomials have the following simple representations in the variable of parametrization:
\begin{align}\label{eq:ChebyEllipse}
T_n(z) = \frac{1}{2}(u^n + u^{-n}), \qquad U_n(z) = \frac{u^{n+1} -
u^{-n-1}}{u - u^{-1}}.
\end{align}
They are special cases of Gegenbauer polynomials, and the relations are given by
\begin{equation}
T_n(x)=\lim_{\lambda \to 0}\frac{n}{2}\frac{C_n^{\lambda}(x)}{\lambda},~~n\geq 1;\qquad U_n(x)=C_n^1(x),~~n\geq 0.
\end{equation}
Equivalently, one has
\begin{align}\label{eq:Cheb}
 T_n(x)= \frac{\Gamma(n+1)\Gamma(\frac{1}{2})}{\Gamma(n+\frac{1}{2})} P_n^{(-\frac{1}{2},-\frac{1}{2})}(x),
\qquad U_n(x) =
\frac{\Gamma(n+2)\Gamma(\frac{3}{2})}{\Gamma(n+\frac{3}{2})}P_n^{(\frac{1}{2},\frac{1}{2})}(x).
\end{align}

Finally, let $1>x_1^\lambda>x_2^\lambda>\ldots>x_n^\lambda>-1$ be the zeros of Gegenbauer polynomials. By \cite[Theorem 6.21.1]{szego1939orthogonal}, it follows that
\begin{align}\label{eq:decrease}
\frac{\partial x_j^{\lambda}}{\partial \lambda} < 0, \qquad  j =
1,\ldots,\floor{n/2},
\end{align}
where $\floor{x}$ denotes the integer part of $x$. Thus, the
positive zeros of a Gegenbauer polynomial $C_n^{\lambda}(x)$
strictly decrease with respect to the parameter $\lambda$.


%

\section{An explicit formula of Jacobi polynomials on the Bernstein ellipse}
\label{sec:Jacobi ellipse}

It is the aim of this section to prove the following theorem, which
gives an explicit representation of $P_n^{(\alpha,\beta)}(x)$ on the
Bernstein ellipse in the variable of parametrization.
\begin{theorem}\label{thm:jacobi ellipse}
For $z\in \mathcal{E}_{\rho}$, i.e.,
\begin{equation}\label{eq:para}
z=\frac{1}{2} \left( u + u^{-1} \right),~~ |u| = \rho \geq 1,
\end{equation}
we have
\begin{align}\label{eq:jacobi ellipse}
P_n^{(\alpha,\beta)}(z) = \sum_{k = -n}^{n} d_{|k|,n} u^k,
\end{align}
where the coefficients are given by
\begin{multline}\label{eq:dkn}
d_{k,n} = \frac{ (n+\alpha+\beta+1)_k (k+\alpha+1)_{n-k} }{ (n-k)!
2^{2k} \Gamma(k+1) }  \\ \times
{}_3\mathrm{F}_2\left[\begin{matrix} k-n, ~
n+k+\alpha+\beta+1,~ k+\frac{1}{2};&
\\   k+\alpha+1,~ 2k+1;  &\end{matrix} \hspace{-.25cm} 1
\right],
\end{multline}
for $0 \leq k \leq n$.
\end{theorem}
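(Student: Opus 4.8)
The plan is to avoid expanding the defining sum \eqref{def:jacob poly} directly (which would force a two-dimensional coefficient extraction because both $(u-1)$ and $(u+1)$ appear) and instead to start from the classical Gauss hypergeometric representation
\[
P_n^{(\alpha,\beta)}(x) = \frac{(\alpha+1)_n}{n!}\,{}_2\mathrm{F}_1\!\left(-n,\,n+\alpha+\beta+1;\,\alpha+1;\,\frac{1-x}{2}\right),
\]
a standard fact recorded in \cite{szego1939orthogonal}. The point of this form is that the argument enters only through the single factor $\frac{1-z}{2}$. Under the parametrization \eqref{eq:para} one has the elementary identities $z-1=\frac{(u-1)^2}{2u}$ and $z+1=\frac{(u+1)^2}{2u}$, so in particular $\frac{1-z}{2}=-\frac{(u-1)^2}{4u}$, and the dependence on $u$ is concentrated in the single power $(u-1)^2$. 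This keeps the subsequent extraction one-dimensional.

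Substituting this into the terminating series and expanding each power $\bigl(-\frac{(u-1)^2}{4u}\bigr)^m$ by the binomial theorem applied to $(u-1)^{2m}=\sum_i\binom{2m}{i}(-1)^i u^i$, I obtain a double sum in which $u$ occurs only as $u^{i-m}$ with $0\le i\le 2m\le 2n$. Since $z$ is invariant under $u\mapsto u^{-1}$, the function $P_n^{(\alpha,\beta)}(z)$ is a Laurent polynomial in $u$ symmetric under $u\mapsto u^{-1}$; hence its coefficients satisfy $c_{-k}=c_k$, and it suffices to compute $c_k=d_{k,n}$ for $0\le k\le n$, which already justifies the form $\sum_{k=-n}^n d_{|k|,n}u^k$ in \eqref{eq:jacobi ellipse}. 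Collecting the terms with $i-m=k$ forces $i=m+k$ and $k\le m\le n$, and after simplifying the sign $(-1)^m(-1)^{m+k}=(-1)^k$ I arrive at the single sum
\[
d_{k,n}=\frac{(\alpha+1)_n}{n!}\,(-1)^k\sum_{m=k}^{n}\frac{(-n)_m\,(n+\alpha+\beta+1)_m}{(\alpha+1)_m\,m!\,4^m}\binom{2m}{m+k}.
\]

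The final step is to reindex $m=k+l$ and factor out the $l=0$ term. Using $(-1)^k(-n)_k=n!/(n-k)!$ together with $(\alpha+1)_n/(\alpha+1)_k=(k+\alpha+1)_{n-k}$, the $l=0$ term collapses exactly to the prefactor $\dfrac{(n+\alpha+\beta+1)_k\,(k+\alpha+1)_{n-k}}{(n-k)!\,2^{2k}\,\Gamma(k+1)}$ of \eqref{eq:dkn}, while the ratio of the general term to the $l=0$ term, after the shifts $(-n)_{k+l}/(-n)_k=(k-n)_l$, $(n+\alpha+\beta+1)_{k+l}/(n+\alpha+\beta+1)_k=(n+k+\alpha+\beta+1)_l$, and $(\alpha+1)_k/(\alpha+1)_{k+l}=1/(k+\alpha+1)_l$, reproduces the ${}_3\mathrm{F}_2$ summand up to the residual factor $\binom{2k+2l}{l}/\bigl(4^l(k+1)_l\bigr)$. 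The crux of the computation is then the duplication identity
\[
\frac{\binom{2k+2l}{l}}{4^l\,(k+1)_l}=\frac{(k+\tfrac12)_l}{(2k+1)_l\,l!},
\]
which follows by splitting $(2k+1)_{2l}$ into its even- and odd-indexed factors (equivalently, from the Gauss duplication formula for the Pochhammer symbol). With this identity the series collapses precisely into the ${}_3\mathrm{F}_2$ appearing in \eqref{eq:dkn}. I expect this last hypergeometric reduction—verifying the duplication identity and correctly matching all Pochhammer shifts, signs, and summation ranges—to be the main obstacle, whereas the substitution and the coefficient extraction are routine. A quick check against the known case $n=1$, where $P_1^{(\alpha,\beta)}(z)=\frac{\alpha-\beta}{2}+\frac{\alpha+\beta+2}{4}(u+u^{-1})$, confirms both $d_{0,1}$ and $d_{1,1}$ and can serve as a sanity test for the sign bookkeeping.
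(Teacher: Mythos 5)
Your proof is correct, and it takes a genuinely different route from the paper's. The paper obtains \eqref{eq:dkn} almost for free from the general Jacobi connection formula (Lemma \ref{lem:Jacobi connection}, cited from Andrews--Askey--Roy): setting $\gamma=\delta=-\tfrac12$ expands $P_n^{(\alpha,\beta)}$ in Chebyshev polynomials $T_k$, the Gamma duplication formula cleans up the coefficients, and $T_k(z)=\tfrac12\left(u^k+u^{-k}\right)$ turns the expansion into the Laurent form \eqref{eq:jacobi ellipse}. You instead work from the ${}_2\mathrm{F}_1$ representation and extract Laurent coefficients directly, using $\tfrac{1-z}{2}=-\tfrac{(u-1)^2}{4u}$ so that the extraction stays one-dimensional; your prefactor computation, the Pochhammer shifts, and the key identity
\begin{equation*}
\frac{1}{4^l\,(k+1)_l}\binom{2k+2l}{l}=\frac{\left(k+\tfrac12\right)_l}{(2k+1)_l\,l!}
\end{equation*}
(which indeed follows from Legendre duplication applied to $(2k+2l)!$ and $(2k)!$) all check out, and the symmetry $d_{-k,n}=d_{k,n}$ is correctly justified by the invariance of $z$ under $u\mapsto u^{-1}$. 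In effect you re-prove, by elementary series manipulation, exactly the $\gamma=\delta=-\tfrac12$ case of the connection formula that the paper imports as a black box. What your route buys is self-containedness: nothing beyond Szeg\H{o}'s ${}_2\mathrm{F}_1$ form and the duplication formula is needed. What the paper's route buys is brevity and a reusable byproduct: the intermediate Chebyshev expansion $P_n^{(\alpha,\beta)}(x)=d_{0,n}+2\sum_{k=1}^n d_{k,n}T_k(x)$, which the paper needs again as the starting point of the recurrence in Proposition \ref{prop:recurr}. Your derivation yields that expansion too, since $d_{|k|,n}u^{k}+d_{|k|,n}u^{-k}=2d_{k,n}T_k(z)$ for $k\geq 1$, so nothing downstream is lost by adopting your argument.
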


To show Theorem \ref{thm:jacobi ellipse}, we note that Chebyshev polynomials of the first kind $T_n(x)$ has a simple explicit formula \eqref{eq:ChebyEllipse} on the Bernstein ellipse, the strategy is then to expand Jacobi polynomials in terms
of $T_n(x)$. The connection formula between two different families of Jacobi polynomials are stated in the following lemma (see \cite[Theorem~7.1.1]{andrews1999special}).
\begin{lemma}\label{lem:Jacobi connection}
Assume that
\begin{align}
P_n^{(\alpha,\beta)}(x) &= \sum_{k=0}^{n} c_{n,k}^{}
P_k^{(\gamma,\delta)}(x).
\end{align}
Then the connection coefficients are given by
\begin{align}
c_{n,k} &= \frac{ (n+\alpha+\beta+1)_k (k+\alpha+1)_{n-k}
(2k+\gamma+\delta+1) \Gamma(k+\gamma+\delta+1) }{ (n-k)!
\Gamma(2k+\gamma+\delta+2) } \nonumber \\
&~~~ \times {}_3\mathrm{F}_2\left[\begin{matrix} k-n, ~
n+k+\alpha+\beta+1,~ k+\gamma+1;&
\\   k+\alpha+1,~ 2k+\gamma+\delta+2;  &\end{matrix} \hspace{-.25cm} 1
\right],
\end{align}
where
\begin{equation}\label{eq:pochammer}
(a)_0 =1, \quad (a)_k =
\frac{\Gamma(a+k)}{\Gamma(a)}=a(a+1)\cdots(a+k-1),\quad k\geq1,
\end{equation}
is the Pochhammer symbol and
\begin{align}
{}_3\mathrm{F}_2\left[\begin{matrix} a_1, ~ a_2,~ a_3 ;&
\\  b_1,~ b_2; &\end{matrix} \hspace{-.25cm} z
\right] = \sum_{k=0}^{\infty} \frac{ (a_1)_k (a_2)_k (a_3)_k }{
(b_1)_k (b_2)_k } \frac{z^k}{k!}
\end{align}
is the generalized hypergeometric
function.

\end{lemma}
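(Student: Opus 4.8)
The plan is to extract the connection coefficient $c_{n,k}$ by orthogonality and then evaluate the resulting integral in closed form. Since $\{P_k^{(\gamma,\delta)}\}_{k\geq 0}$ is an orthogonal family with respect to the weight $(1-x)^\gamma(1+x)^\delta$, multiplying the assumed expansion by $(1-x)^\gamma(1+x)^\delta P_k^{(\gamma,\delta)}(x)$ and integrating over $[-1,1]$ isolates a single term, so that
\[
c_{n,k} = \frac{1}{h_k^{(\gamma,\delta)}}\int_{-1}^{1} (1-x)^\gamma (1+x)^\delta P_n^{(\alpha,\beta)}(x) P_k^{(\gamma,\delta)}(x)\ud x .
\]
To make this computable, I would expand $P_n^{(\alpha,\beta)}$ in powers of $\frac{1-x}{2}$ via its standard hypergeometric representation, $P_n^{(\alpha,\beta)}(x) = \frac{(\alpha+1)_n}{n!}\sum_{j=0}^{n} \frac{(-n)_j (n+\alpha+\beta+1)_j}{(\alpha+1)_j\, j!}\bigl(\frac{1-x}{2}\bigr)^{j}$, which reduces $c_{n,k}$ to a finite linear combination of the moment integrals $I_{j,k} := \int_{-1}^{1} (1-x)^\gamma (1+x)^\delta \bigl(\frac{1-x}{2}\bigr)^{j} P_k^{(\gamma,\delta)}(x)\ud x$.

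Each $I_{j,k}$ I would evaluate by inserting the Rodrigues formula for $P_k^{(\gamma,\delta)}$ and integrating by parts $k$ times. The boundary terms vanish, and the $k$-fold derivative of $\bigl(\frac{1-x}{2}\bigr)^{j}$ is identically zero for $j<k$. This has two consequences: only the terms with $j=k,k+1,\dots,n$ survive, which is exactly what produces a terminating series and the upper parameter $k-n$; and for $j\geq k$ each surviving integral collapses, after one Beta integral, to the clean Gamma-function product
\[
I_{j,k} = (-1)^k\, 2^{\gamma+\delta+1} \binom{j}{k} \frac{\Gamma(\gamma+j+1)\Gamma(\delta+k+1)}{\Gamma(\gamma+\delta+j+k+2)} .
\]

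It then remains to assemble the sum. The $j=k$ contribution, multiplied by the overall factor $(\alpha+1)_n/(n!\,h_k^{(\gamma,\delta)})$, reproduces precisely the rational and Gamma-function prefactor in the statement; here the key simplifications are $(\alpha+1)_n/(\alpha+1)_k = (k+\alpha+1)_{n-k}$ and $(-1)^k(-n)_k/n! = 1/(n-k)!$, together with the cancellation of the factor $2^{\gamma+\delta+1}$ against $h_k^{(\gamma,\delta)}$. Writing $j=k+m$ with $m=0,\dots,n-k$, I would then compute the ratio of consecutive summands and verify that it equals $\frac{(k-n+m)(n+k+\alpha+\beta+1+m)(k+\gamma+1+m)}{(k+\alpha+1+m)(2k+\gamma+\delta+2+m)(m+1)}$, which is exactly the term ratio of the asserted ${}_3\mathrm{F}_2$ evaluated at argument $1$.

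The main obstacle is purely the bookkeeping: tracking the several Gamma and Pochhammer factors arising from the Rodrigues manipulation, the Beta integral, and the normalization $h_k^{(\gamma,\delta)}$, and checking that they collapse exactly into the stated prefactor and hypergeometric term ratio. There are no genuine analytic difficulties beyond justifying the (standard) vanishing of boundary terms in the repeated integration by parts, which follows from the factors $(1-x)^{\gamma+k}(1+x)^{\delta+k}$ for $\gamma,\delta>-1$.
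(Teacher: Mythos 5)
Your proposal is correct, and the bookkeeping does close up: with the Rodrigues formula $(1-x)^{\gamma}(1+x)^{\delta}P_k^{(\gamma,\delta)}(x)=\frac{(-1)^k}{2^k k!}\frac{d^k}{dx^k}\bigl[(1-x)^{\gamma+k}(1+x)^{\delta+k}\bigr]$, $k$-fold integration by parts and the substitution $x=1-2t$ give exactly your
\begin{equation*}
I_{j,k}=(-1)^k\,2^{\gamma+\delta+1}\binom{j}{k}\frac{\Gamma(\gamma+j+1)\Gamma(\delta+k+1)}{\Gamma(\gamma+\delta+j+k+2)},\qquad j\geq k,
\end{equation*}
and $I_{j,k}=0$ for $j<k$. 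The $j=k$ term, multiplied by $(\alpha+1)_n/(n!\,h_k^{(\gamma,\delta)})$, collapses to the stated prefactor via $(-1)^k(-n)_k=n!/(n-k)!$ and $(\alpha+1)_n/(\alpha+1)_k=(k+\alpha+1)_{n-k}$, and the ratio of consecutive summands (with $j=k+m$, using $\frac{1}{(k+m)!}\binom{k+m}{k}=\frac{1}{k!\,m!}$) is precisely $\frac{(k-n+m)(n+k+\alpha+\beta+1+m)(k+\gamma+1+m)}{(k+\alpha+1+m)(2k+\gamma+\delta+2+m)(m+1)}$, the term ratio of the asserted ${}_3\mathrm{F}_2$ at unit argument. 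The vanishing of the boundary terms is justified exactly as you say, since every term of $\frac{d^{m}}{dx^{m}}\bigl[(1-x)^{\gamma+k}(1+x)^{\delta+k}\bigr]$ with $m<k$ carries exponents strictly greater than $-1+1=0$ at both endpoints when $\gamma,\delta>-1$.

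One caveat on the comparison: the paper does not prove this lemma at all; it quotes it from Andrews, Askey and Roy, Theorem 7.1.1. The proof in that reference is essentially the computation you reconstruct — isolate $c_{n,k}$ by orthogonality, apply Rodrigues' formula for $P_k^{(\gamma,\delta)}$ with $k$-fold integration by parts, and evaluate Beta integrals — the only cosmetic difference being that the reference shifts the $k$ derivatives onto $P_n^{(\alpha,\beta)}$ (using $\frac{d^k}{dx^k}P_n^{(\alpha,\beta)}=2^{-k}(n+\alpha+\beta+1)_k P_{n-k}^{(\alpha+k,\beta+k)}$) before expanding, whereas you expand $P_n^{(\alpha,\beta)}$ in powers of $\frac{1-x}{2}$ first; the two orderings are interchangeable. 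So your argument is not a new route but a correct, self-contained reconstruction of the standard proof behind the citation. The only hypothesis worth stating explicitly is $\gamma,\delta>-1$, needed both for the orthogonality step and for the boundary terms; the paper's application with $\gamma=\delta=-\tfrac{1}{2}$ satisfies it.
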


With the aid of Lemma \ref{lem:Jacobi connection}, we are now ready to prove Theorem \ref{thm:jacobi ellipse}.

\paragraph{Proof of Theorem \ref{thm:jacobi ellipse}}
By taking $\gamma=\delta=-\frac{1}{2}$ in Lemma \ref{lem:Jacobi
connection}, it follows that
\begin{align}
P_n^{(\alpha,\beta)}(x) &= \sum_{k=0}^{n}{'} \frac{
(n+\alpha+\beta+1)_k (k+\alpha+1)_{n-k} 2 \Gamma(k+1) }{ (n-k)!
\Gamma(2k+1) } \nonumber \\
&~~~ \times {}_3\mathrm{F}_2\left[\begin{matrix} k-n, ~
n+k+\alpha+\beta+1,~ k+\frac{1}{2};&
\\   k+\alpha+1,~ 2k+1;  &\end{matrix} \hspace{-.25cm} 1
\right] P_k^{(-\frac{1}{2},-\frac{1}{2})}(x),
\end{align}
where the prime indicates that the first term of the sum should be
halved. This, together with the identity \eqref{eq:Cheb}, gives
\begin{align}\label{eq:jacobi and cheby}
P_n^{(\alpha,\beta)}(x) &= \sum_{k=0}^{n}{'} \frac{
(n+\alpha+\beta+1)_k (k+\alpha+1)_{n-k} 2 \Gamma(k+1) }{ (n-k)!
\Gamma(2k+1) }  \nonumber \\
&~~~ \times {}_3\mathrm{F}_2\left[\begin{matrix} k-n, ~
n+k+\alpha+\beta+1,~ k+\frac{1}{2};&
\\   k+\alpha+1,~ 2k+1;  &\end{matrix} \hspace{-.25cm} 1
\right] \frac{\Gamma(k+\frac{1}{2})}{\Gamma(k+1)\Gamma(\frac{1}{2})} T_k(x) \nonumber \\
& = \sum_{k=0}^{n}{'} \frac{ (n+\alpha+\beta+1)_k (k+\alpha+1)_{n-k}
}{
(n-k)! 2^{2k-1} \Gamma(k+1) } \nonumber \\
&~~~ \times {}_3\mathrm{F}_2\left[\begin{matrix} k-n, ~
n+k+\alpha+\beta+1,~ k+\frac{1}{2};&
\\   k+\alpha+1,~ 2k+1;  &\end{matrix} \hspace{-.25cm} 1
\right] T_k(x),
\end{align}
where we have made use of the the duplication formula (cf. \cite[Formula 5.5.5]{DLMF})
\begin{equation}\label{eq:dupli}
\mathop{\Gamma\/}\nolimits\!\left(2z\right)=\pi^{-1/2}2^{2z-1}\mathop{\Gamma\/%
}\nolimits\!\left(z\right)\mathop{\Gamma\/}\nolimits\!\left(z+\tfrac{1}{2}%
\right),\qquad 2z\neq 0,-1,-2,\ldots,
\end{equation}
in the second step.

Note that $T_k(z)$ has a simple expression $\frac{1}{2}(u+u^{-n})$ on $\mathcal{E}_{\rho}$.
Substituting this formula into the last equation of \eqref{eq:jacobi and cheby} gives us the desired result.

This completes the proof of Theorem \ref{thm:jacobi ellipse}. \qed

\begin{remark}
Suppose that $u=e^{i\theta}$ (i.e., $\rho=1$), we obtain from
\eqref{eq:jacobi ellipse} the following trigonometric
representations of Jacobi polynomials
\begin{equation}
P_n^{(\alpha,\beta)}(\cos\theta) = d_{0,n} + 2\sum_{k=1}^{n} d_{k,n}
\cos(k\theta).
\end{equation}
The above formula seems to be new, except for the special case $\alpha=\beta$ (cf. \cite[Formula (4.9.19)]{szego1939orthogonal}).
\end{remark}

\begin{remark}\label{rk:ultraspherical}
When $\alpha = \beta$, the coefficients $d_{k,n}$ can be further
simplified with the help of the properties of hypergeometric
function ${}_3\mathrm{F}_2$. Indeed, on account of the fact (see \cite[Theorem~3.5.5]{andrews1999special}) that
\begin{equation}
{}_3\mathrm{F}_2\left[\begin{matrix} a, ~
b,~ c;&
\\   (a+b+1)/2,~ 2c;  &\end{matrix} \hspace{-.25cm} 1
\right]=\frac{\Gamma\left(\frac{1}{2}\right)\Gamma\left(c+\frac{1}{2}\right)\Gamma\left(\frac{a+b+1}{2}\right)\Gamma\left(c-\frac{a+b-1}{2}\right)}
{\Gamma\left(\frac{a+1}{2}\right)\Gamma\left(\frac{b+1}{2}\right)\Gamma\left(c-\frac{a-1}{2}\right)\Gamma\left(c-\frac{b-1}{2}\right)},
\end{equation}
it follows from \eqref{eq:dkn} and straightforward calculations that
\begin{align}\label{eq:GegenbauerCoeff}
d_{k,n} = \left\{\begin{array}{cc}
  {\displaystyle \frac{2^{2\alpha} \Gamma(n+\alpha+1) \Gamma(\frac{k+n+1}{2}+\alpha) \Gamma(\frac{n-k+1}{2}+\alpha)}{\sqrt{\pi} \Gamma(n+2\alpha+1) \Gamma(\frac{k+n}{2}+1) \Gamma(\frac{n-k}{2}+1) \Gamma(\alpha+\frac{1}{2}) }  }, & \mbox{if $n-k$ is even}, \\ [15pt]
  { 0 }, & \mbox{if $n-k$ is odd}.
  \end{array}
  \right.
\end{align}
This particularly implies that
$$P_n^{(\alpha,\alpha)}(z) =
\sum_{k=0}^{n} d_{|n-2k|,n} u^{n-2k}.$$
Up to some constant factors, this recovers \eqref{eq:XWZexpli} which was derived via the three-term recurrence relation of Gegenbauer polynomials in \cite{xie2013exponential}. The approach used therein, however, seems difficult to be generalized to handle the Jacobi case.
\end{remark}

\section{Extrema of Jacobi polynomials on the Bernstein ellipse}
\label{sec:JacobiExtrema}

In this section, we will consider the extrema of Jacobi polynomials
on the Bernstein ellipse. The maximum value and the minimum value
will be discussed in subsections \ref{sec:max} and \ref{sec:min},
respectively.

\subsection{Maximum value}\label{sec:max}
By \cite[Theorem 4.5.1]{ismail1998classical}, it is known that for the Gegenbauer polynomials $C_n^{\lambda}(x)$,
$\max\left|C_{n}^{\lambda}(z)\right|$, $z\in \mathcal{E}_{\rho}$ with $\lambda\geq 0$ is attained at the right endpoint of the major axis. It comes out that similar property holds for the Jacobi polynomials $P_n^{(\alpha,\beta)}$ with $\alpha+\beta \geq -1$, which is our main result of this section.

\begin{theorem}\label{prop:location}
For $\rho\geq1$ and $n\geq 1$, we have
\begin{enumerate}
\item[\rm (i)] If $\alpha>\beta$ and $\alpha+\beta\geq-1$, then the maximum value of $\left|P_{n}^{(\alpha,\beta)}(z)\right|$ is attained uniquely at the right endpoint of the major axis, i.e.,
\begin{equation}\label{eq:maxcaseI}
\max_{z\in \mathcal{E}_{\rho}} \left|P_{n}^{(\alpha,\beta)}(z)\right| =
P_{n}^{(\alpha,\beta)}\left(\tfrac{1}{2}\left(\rho+\rho^{-1}\right)\right).
\end{equation}

\item[\rm (ii)] If $\alpha<\beta$ and $\alpha+\beta\geq-1$, then the maximum value of $\left|P_{n}^{(\alpha,\beta)}(z)\right|$ is attained uniquely at the left endpoint of the major axis, i.e.,
\begin{equation}\label{eq:maxCaseII}
\max_{z\in \mathcal{E}_{\rho}} \left|P_{n}^{(\alpha,\beta)}(z)\right| = \left|P_{n}^{(\alpha,\beta)}\left(-\tfrac{1}{2}\left(\rho+\rho^{-1}\right)\right)\right|. 
\end{equation}

\item[\rm (iii)] If $\alpha=\beta \geq -1/2$, then the maximum value
of $\left|P_{n}^{(\alpha,\beta)}(z)\right|$ is attained at two endpoints of the major axis, i.e.,
\begin{equation}\label{eq:maxCaseIII}
\max_{z\in \mathcal{E}_{\rho}} \left|P_{n}^{(\alpha,\beta)}(z)\right| 
= \left|P_{n}^{(\alpha,\beta)}\left(\pm
\tfrac{1}{2}\left(\rho+\rho^{-1}\right)\right)\right|.
\end{equation}
Moreover, the maximum value can only be attained at these two real
points $\pm \tfrac{1}{2}\left(\rho+\rho^{-1}\right)$ if $\alpha=\beta>-1/2$.
\end{enumerate}
\end{theorem}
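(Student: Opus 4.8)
The plan is to read off everything from the explicit representation in Theorem~\ref{thm:jacobi ellipse}. Setting $u=\rho e^{i\theta}$ and pairing the terms with exponents $\pm k$ in \eqref{eq:jacobi ellipse}, one has
\[
P_n^{(\alpha,\beta)}(z)=d_{0,n}+\sum_{k=1}^{n}d_{k,n}\bigl(u^{k}+u^{-k}\bigr),\qquad z=\tfrac12(u+u^{-1}),
\]
so the problem reduces to two ingredients: the signs of the coefficients $d_{k,n}$, and the elementary estimate $|u^{k}+u^{-k}|\le \rho^{k}+\rho^{-k}$, which (since $|u^k+u^{-k}|^2=\rho^{2k}+\rho^{-2k}+2\cos(2k\theta)$) is an equality for $\rho>1$ exactly when $\cos(2k\theta)=1$. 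First I would establish the key positivity statement: \emph{if $\alpha+\beta\ge -1$ and $n\ge1$, then $d_{k,n}\ge 0$ for all $0\le k\le n$, with $d_{n,n}>0$, and moreover $d_{1,n}>0$ whenever $\alpha\neq\beta$.} The leading value $d_{n,n}=k_n^{(\alpha,\beta)}/2^{n}>0$ follows from \eqref{eq:kn} by letting $u\to\infty$.

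The main obstacle is precisely this positivity lemma, because the terminating ${}_3\mathrm{F}_2$ appearing in \eqref{eq:dkn} has terms of alternating sign, so no termwise estimate is available. This is where a three-term recurrence for the coefficients $\{d_{k,n}\}$ enters: combining the structural recurrences satisfied by $P_n^{(\alpha,\beta)}$ with the expansion \eqref{eq:jacobi ellipse} and matching powers of $u$ yields a relation linking $d_{k,n}$ to neighbouring coefficients, which I would use to propagate non-negativity downward from the explicitly positive top coefficients $d_{n,n}$ and $d_{n-1,n}$. I would then check that, under the single hypothesis $\alpha+\beta\ge -1$, the coefficients of this recurrence have the signs needed for the induction to go through; the threshold $\alpha+\beta=-1$ should be exactly where one of these sign conditions becomes critical, which explains the form of the hypothesis. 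Controlling these signs uniformly in $k$ and $n$ is the crux of the whole argument.

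Granting the lemma, part~(i) is immediate: for $\alpha>\beta$ all $d_{k,n}\ge 0$ and $d_{1,n}>0$, so by the triangle inequality
\[
\bigl|P_n^{(\alpha,\beta)}(z)\bigr|\le d_{0,n}+\sum_{k=1}^{n}d_{k,n}\bigl(\rho^{k}+\rho^{-k}\bigr)=P_n^{(\alpha,\beta)}\!\left(\tfrac12(\rho+\rho^{-1})\right),
\]
the right-hand side being the value at the right endpoint (which is positive, so no absolute value is needed). Equality forces every nonzero term $d_{k,n}u^{k}$ to be a positive real; since $d_{1,n}>0$, the term $d_{1,n}u$ is positive real only for $\theta=0$, giving uniqueness at the right endpoint. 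Part~(ii) then follows from part~(i) via the symmetry $P_n^{(\alpha,\beta)}(x)=(-1)^n P_n^{(\beta,\alpha)}(-x)$, which maps $\mathcal{E}_\rho$ to itself and interchanges the two endpoints of the major axis.

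For part~(iii) with $\alpha=\beta$, I would use the closed form \eqref{eq:GegenbauerCoeff}, which shows $d_{k,n}>0$ exactly when $n-k$ is even (for $\alpha>-1/2$) and $d_{k,n}=0$ otherwise. The triangle-inequality bound again caps $|P_n^{(\alpha,\beta)}(z)|$ by its value at $\theta=0$; by the parity relation \eqref{eq:symmetry relation} the modulus at the two endpoints $\pm\tfrac12(\rho+\rho^{-1})$ coincides, so both are maxima. For uniqueness when $\alpha=\beta>-1/2$, the smallest nonzero positive index is $k\in\{1,2\}$ (according to the parity of $n$), and the equality analysis — that this term be real of the correct sign — forces $\theta\in\{0,\pi\}$, i.e.\ only the two endpoints. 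When $\alpha=\beta=-1/2$ this breaks down: there $d_{k,n}=0$ for all $k<n$ and only $d_{n,n}\neq0$ (the Chebyshev case $T_n$), so the maximum is attained at all $2n$ points with $\cos(2n\theta)=1$, which is precisely why the strict conclusion is claimed only for $\alpha=\beta>-1/2$.
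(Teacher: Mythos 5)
Your overall strategy coincides with the paper's: expand $P_n^{(\alpha,\beta)}$ on $\mathcal{E}_\rho$ via Theorem~\ref{thm:jacobi ellipse}, control the signs of the $d_{k,n}$ through a three-term recurrence in $k$ (this is exactly the paper's Proposition~\ref{prop:recurr}, obtained there from the Jacobi differential equation, with initial data \eqref{eq:initialcondition}, and Proposition~\ref{prop:sign}), use the closed Gegenbauer form \eqref{eq:GegenbauerCoeff} when $\alpha=\beta$, and finish with the triangle inequality plus an equality analysis. Your one genuine deviation is part~(ii): you reduce it to part~(i) via the reflection $P_n^{(\alpha,\beta)}(-x)=(-1)^nP_n^{(\beta,\alpha)}(x)$, which maps $\mathcal{E}_\rho$ to itself and swaps the endpoints of the major axis. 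The paper instead proves directly that the coefficients alternate, $(-1)^{n-k}d_{k,n}>0$ for $\alpha<\beta$, $\alpha+\beta\geq-1$, and runs the triangle inequality with equality at $u=-\rho$. Both are correct; your symmetry route is tidier in that it halves the sign analysis, while the paper's route yields the alternating-sign information as a by-product (stated as Proposition~\ref{prop:sign}(ii)), which is of independent interest.

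There is, however, one misstatement you should repair: your ``key positivity statement'' is false in the generality you claim. For $\alpha<\beta$ with $\alpha+\beta\geq-1$ the coefficients are \emph{not} nonnegative; they alternate in sign. Indeed, from \eqref{eq:initialcondition}, $d_{n-1,n}=\frac{(\alpha-\beta)\Gamma(2n+\alpha+\beta)}{2^{2n-1}\Gamma(n+\alpha+\beta+1)\Gamma(n)}<0$ when $\alpha<\beta$, and more generally $(-1)^{n-k}d_{k,n}>0$. Your proposed induction also cannot rescue the claim: the cross-coefficient in \eqref{eq:recurrence} equals $\frac{2(\alpha-\beta)(k+1)}{(n-k)(n+k+\alpha+\beta+1)}$, whose sign is that of $\alpha-\beta$, so nonnegativity does not propagate downward when $\alpha<\beta$. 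The fix is simply to state and prove the lemma for $\alpha>\beta$ only (handling $\alpha=\beta$ by \eqref{eq:GegenbauerCoeff}), which is all your argument actually uses, since you obtain part~(ii) by symmetry. Under $\alpha>\beta$, $\alpha+\beta\geq-1$, the induction does go through: both initial values are positive, the cross-coefficient is strictly positive, and the second coefficient $\frac{(n+k+2)(n-k-2+\alpha+\beta+1)}{(n-k)(n+k+\alpha+\beta+1)}$ is nonnegative (it vanishes only when $k=n-2$ and $\alpha+\beta=-1$, in which case positivity of $d_{k,n}$ still follows from the strictly positive first term). Two further small points: your claim that equality forces every term $d_{k,n}u^k$ to be a \emph{positive} real needs the presence of the strictly positive constant term $d_{0,n}$ to pin down the common phase (available in case~(i)); and in case~(iii) with $n$ even the same role is played by $d_{0,n}>0$ together with the $k=2$ term, exactly as you indicate.
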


The assertion in item $\rm (iii)$ corresponds to the case of Gegenbauer polynomials mentioned at the very beginning. Moreover, the condition $\alpha+\beta \geq -1$ implies that $\max\{\alpha,\beta\}\geq -\frac{1}{2}$. By setting $\rho=1$ in the above theorem, we recover the result concerning maximum of $\left|P_{n}^{(\alpha,\beta)}(x)\right|$ over the orthogonal interval $[-1,1]$, as explained after \eqref{eq:max}.

The proof of Theorem \ref{prop:location} relies on the explicit
formula of $P_{n}^{(\alpha,\beta)}(z)$ on the Bernstein ellipse
established in Theorem \ref{thm:jacobi ellipse}. The essential issue
here is to determine the signs of the coefficients $d_{k,n}$
appearing in \eqref{eq:jacobi ellipse} under various conditions on
the parameters $\alpha$ and $\beta$; see Proposition \ref{prop:sign}
below. To proceed, we start with the following proposition which
reveals a recurrence relation for the coefficients $\{ d_{k,n}
\}_{k=0}^{n}$ and plays a fundamental role in the sequel.
\begin{proposition}\label{prop:recurr}
With $d_{k,n}$ defined in \eqref{eq:dkn}, we have, for each $k\geq
0$ and $k+2 \leq n$,
\begin{align}\label{eq:recurrence} d_{k,n} &=
\frac{2(\alpha-\beta)(k+1)}{ n(n+\alpha+\beta+1) - k^2 - (\alpha+\beta+1)k } d_{k+1,n} \nonumber \\
&~~~  +  \frac{  n(n+\alpha+\beta+1) - (k+2)^2 +
(\alpha+\beta+1)(k+2) }{ n(n+\alpha+\beta+1) - k^2 -
(\alpha+\beta+1) k } d_{k+2,n},
\end{align}
with initial conditions
\begin{equation}\label{eq:initialcondition}
d_{n,n} =
\frac{\Gamma(2n+\alpha+\beta+1)}{2^{2n}\Gamma(n+\alpha+\beta+1)\Gamma(n+1)},
~~ d_{n-1,n} = \frac{(\alpha-\beta)
\Gamma(2n+\alpha+\beta)}{2^{2n-1}\Gamma(n+\alpha+\beta+1)\Gamma(n)}.
\end{equation}
\end{proposition}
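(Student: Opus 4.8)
The plan is to exploit the fact that $y(z)=P_n^{(\alpha,\beta)}(z)$ satisfies the Jacobi differential equation
\[
(1-z^2)\,y'' + \big(\beta-\alpha-(\alpha+\beta+2)z\big)\,y' + n(n+\alpha+\beta+1)\,y = 0 ,
\]
and to translate this into a linear relation among the Laurent coefficients $\{d_{|k|,n}\}$ of \eqref{eq:jacobi ellipse}. Since $z=\tfrac12(u+u^{-1})$ turns $P_n^{(\alpha,\beta)}(z)$ into a Laurent polynomial $Y(u)=\sum_{k=-n}^{n} d_{|k|,n}\,u^k$ of degree $n$, it suffices to rewrite the differential operator in the variable $u$ and match coefficients of equal powers of $u$. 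The natural device is the Euler operator $D:=u\,\mathrm{d}/\mathrm{d}u$, which acts diagonally on the Laurent basis, $D\,u^k=k\,u^k$, so that $DY$ and $D^2Y$ are obtained from $Y$ by inserting the factors $k$ and $k^2$.

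First I would record the elementary identities attached to the parametrization. Writing $w:=u-u^{-1}$, one has $2z=u+u^{-1}$, $Dw=2z$, and $1-z^2=-\tfrac14 w^2$. From $\mathrm{d}z/\mathrm{d}u=\tfrac12(1-u^{-2})=w/(2u)$ I obtain $\mathrm{d}/\mathrm{d}z=\tfrac{2}{w}D$, and a second application yields
\[
\frac{\mathrm{d}^2}{\mathrm{d}z^2}=\frac{4}{w^2}\,D^2-\frac{8z}{w^3}\,D .
\]
Substituting these, together with $1-z^2=-\tfrac14 w^2$, into the Jacobi equation, the powers of $w$ collapse, and after multiplying through by $w$ to clear denominators the equation becomes the Laurent-polynomial identity
\[
w\,D^2Y-2\big[(\beta-\alpha)-(\alpha+\beta+1)z\big]\,DY-n(n+\alpha+\beta+1)\,w\,Y=0 .
\]

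Next I would insert $Y=\sum_k d_{|k|,n}u^k$ and replace $w=u-u^{-1}$ and $2z=u+u^{-1}$, so that every term is an explicit Laurent series in the $d_{|k|,n}$; multiplication by $w$ and by $z$ merely shifts indices by $\pm1$. Collecting the coefficient of $u^m$ for $m\ge 1$ (where the three indices $m-1,m,m+1$ are nonnegative, so the absolute values cause no trouble) produces a three-term relation $A_m\,d_{|m-1|,n}+B_m\,d_{|m|,n}+C_m\,d_{|m+1|,n}=0$ with explicit polynomial coefficients $A_m,B_m,C_m$. Setting $m=k+1$ and solving for $d_{k,n}$ gives exactly \eqref{eq:recurrence}; here it is convenient to note the factorization $n(n+\alpha+\beta+1)-k^2-(\alpha+\beta+1)k=(n-k)(n+k+\alpha+\beta+1)$, which is strictly positive for $0\le k\le n-2$ and shows the recurrence is well defined. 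The main obstacle is purely computational bookkeeping: transforming $\mathrm{d}^2/\mathrm{d}z^2$ correctly and tracking the index shifts so that the three surviving contributions assemble into the stated rational functions of $k$. No conceptual difficulty arises once the Euler operator is used.

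Finally, the two initial conditions follow directly from the explicit formula \eqref{eq:dkn}. For $d_{n,n}$ the ${}_3\mathrm{F}_2$ terminates trivially because its upper parameter $k-n$ vanishes, leaving $d_{n,n}=2^{-2n}k_n^{(\alpha,\beta)}$ after using \eqref{eq:kn}; equivalently, $d_{n,n}$ is the coefficient of $u^n$, which is $2^{-n}$ times the leading coefficient of $P_n^{(\alpha,\beta)}$. For $d_{n-1,n}$ the value $k-n=-1$ reduces the ${}_3\mathrm{F}_2$ to a two-term sum; using $k+\tfrac12=\tfrac{2k+1}{2}$ to cancel the factor $2k+1$ in the denominator, it collapses to $\tfrac{\alpha-\beta}{2(n+\alpha)}$, and combining this with the prefactor and the evaluation $(n+\alpha+\beta+1)_{n-1}=\Gamma(2n+\alpha+\beta)/\Gamma(n+\alpha+\beta+1)$ yields the stated $d_{n-1,n}$. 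These evaluations are routine and complete the proof.
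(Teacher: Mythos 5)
Your proposal is correct, and it shares the paper's central idea --- the recurrence is extracted by substituting the expansion of $P_n^{(\alpha,\beta)}$ into the Jacobi differential equation --- but the execution is genuinely different. The paper stays in the variable $x$: it plugs the Chebyshev expansion $P_n^{(\alpha,\beta)}(x)=d_{0,n}+2\sum_{k=1}^n d_{k,n}T_k(x)$ into the ODE, eliminates second derivatives using $(1-x^2)T_k''-xT_k'+k^2T_k=0$, and then converts everything to the second-kind basis via $T_k'=kU_{k-1}$, $2xU_k=U_{k+1}+U_{k-1}$, $2T_k=U_k-U_{k-2}$, finally equating coefficients of $U_k$. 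You instead transplant the ODE to the $u$-plane with the Euler operator $D=u\,\mathrm{d}/\mathrm{d}u$; your identities $\mathrm{d}/\mathrm{d}z=(2/w)D$, $\mathrm{d}^2/\mathrm{d}z^2=(4/w^2)D^2-(8z/w^3)D$, $1-z^2=-w^2/4$ with $w=u-u^{-1}$ are all correct, and so is the transformed equation
\begin{equation*}
w\,D^2Y-2\bigl[(\beta-\alpha)-(\alpha+\beta+1)z\bigr]DY-n(n+\alpha+\beta+1)\,w\,Y=0 ;
\end{equation*}
matching the coefficient of $u^{k+1}$ in this Laurent identity does reproduce \eqref{eq:recurrence} exactly, with the same admissible range $0\le k\le n-2$, and your factorization $n(n+\alpha+\beta+1)-k^2-(\alpha+\beta+1)k=(n-k)(n+k+\alpha+\beta+1)>0$ is right. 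Your route is arguably more mechanical --- powers of $u$ diagonalize $D$, so no Chebyshev identities are needed --- at the cost of the change-of-variables computation; the two arguments are of comparable length and are, in effect, the same coefficient-matching carried out in two different bases ($u^k$ versus $U_k$). A bonus of your write-up is that you verify the initial conditions \eqref{eq:initialcondition} directly from \eqref{eq:dkn} by terminating the ${}_3\mathrm{F}_2$ at $k=n$ and $k=n-1$, a step the paper's proof leaves implicit. One small slip: you first write $d_{n,n}=2^{-2n}k_n^{(\alpha,\beta)}$, which should be $d_{n,n}=2^{-n}k_n^{(\alpha,\beta)}$; this is consistent with your own subsequent (correct) remark that $d_{n,n}$ is $2^{-n}$ times the leading coefficient, and with \eqref{eq:initialcondition}, so nothing downstream is affected.
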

\begin{proof}
In view of \eqref{eq:jacobi and cheby}, it is readily seen that
\begin{align}\label{eq:JacoCheb}
P_n^{(\alpha,\beta)}(x) = d_{0,n} + 2\sum_{k=1}^{n} d_{k,n} T_k(x).
\end{align}
We recall from \cite[Theorem~4.2.1]{szego1939orthogonal} that the
Jacobi polynomial $P_n^{(\alpha,\beta)}(x)$ satisfies the following
linear differential equation
\[
(1-x^2) y''(x) + [\beta - \alpha - (\alpha+\beta+2)x] y'(x) +
n(n+\alpha+\beta+1) y(x) = 0.
\]
Substituting \eqref{eq:JacoCheb} into the above equation gives
\begin{align}\label{eq:DiffEq}
&2 \sum_{k=1}^{n} d_{k,n} \big\{ (1-x^2)T_k^{''}(x) +
[\beta-\alpha-(\alpha+\beta+2)x ] T_k'(x) \nonumber \\
&~~~ + n(n+\alpha+\beta+1) T_k(x) \big\}  + n(n+\alpha+\beta+1)
d_{0,n} = 0.
\end{align}

Our strategy now is to rewrite the left hand side of
\eqref{eq:DiffEq} in terms of the Chebyshev polynomial of the second
kind $U_k(x)$. To this end, note that $T_k(x)$ satisfies
\[
(1-x^2)T_k''(x) - xT_k'(x) + k^2 T_k(x) = 0,
\]
we then obtain from \eqref{eq:DiffEq} that
\begin{align*}
&2 \sum_{k=1}^{n} d_{k,n} \big\{
[\beta-\alpha-(\alpha+\beta+1)x ] T_k'(x)  \\
&~~~ + [n(n+\alpha+\beta+1)-k^2] T_k(x) \big\}  +
n(n+\alpha+\beta+1) d_{0,n} = 0.
\end{align*}
This, together with the facts (cf. \cite[\S 18.9]{DLMF}) that
\begin{align*}
T_k'(x) &= k U_{k-1}(x),  \\
2xU_k(x)& = U_{k+1}(x)+U_{k-1}(x), \quad  k\geq1, \\
2T_k(x)&=U_k(x)-U_{k-2}(x), ~\textrm{$k\geq 1$ with $U_{-1}(x)=0$,}
\end{align*}
implies
\begin{align*}
& \sum_{k=1}^{n} d_{k,n} \big\{
[n(n+\alpha+\beta+1)-k^2-k(\alpha+\beta+1) ] U_k(x)+2(\beta-\alpha)kU_{k-1}(x) \nonumber \\
&~~~ - [n(n+\alpha+\beta+1)-k^2+k(\alpha+\beta+1)] U_{k-2}(x) \big\}
+ n(n+\alpha+\beta+1) d_{0,n} = 0.
\end{align*}
By setting the coefficients of $U_{k}(x)$, $1\leq k \leq n-2$ and
the constant term to be zero, the recurrence relation
\eqref{eq:recurrence} is immediate.

This completes the proof of Proposition \ref{prop:recurr}.
\end{proof}

\begin{remark}
From \eqref{eq:recurrence}, it is readily seen that if
$\alpha=\beta$, the three-term recurrence relation can be simplified
as
\begin{align}
d_{k,n} = \frac{ n(n+2\alpha+1) - (k+2)^2 + (2\alpha+1)(k+2) }{
n(n+2\alpha+1) - k^2 - (2\alpha+1) k } d_{k+2,n}.
\end{align}
In addition, note that the coefficients $d_{k,n}$ in \eqref{eq:dkn} involve the hypergeometric function ${}_3\mathrm{F}_2$, it would be helpful to use the recurrence relation \eqref{eq:recurrence} in actual computations.
\end{remark}

%
%
%

As a consequence of Proposition \ref{prop:recurr}, we are able to determine the signs of the
coefficients $\{d_{k,n} \}_{k=0}^{n}$ in the following proposition.
\begin{proposition}\label{prop:sign}
For $0\leq k \leq n$, we have
\begin{enumerate}
\item[\rm (i)] If $\alpha>\beta$ and $\alpha+\beta\geq-1$, then $d_{k,n}>0$.

\item[\rm (ii)] If $\alpha<\beta$ and $\alpha+\beta\geq-1$, then
$(-1)^{n-k}d_{k,n}>0$.

\item[\rm (iii)] If $\alpha=\beta>-\frac{1}{2}$, then
\begin{equation}\label{eq:dkn1}
d_{k,n}\left\{
         \begin{array}{ll}
           >0, & \hbox{if $n-k$ is even,} \\[8pt]
           =0, & \hbox{if $n-k$ is odd.}
         \end{array}
       \right.
\end{equation}
If $\alpha=\beta=-\frac{1}{2}$, then
 \begin{equation}\label{eq:dkn2}
d_{k,n}\left\{
         \begin{array}{ll}
           >0, & \hbox{if $k=n$,} \\ [8pt]
           =0, & \hbox{if $k=1,2,\ldots,n-1$.}
         \end{array}
       \right.
\end{equation}
\end{enumerate}
\end{proposition}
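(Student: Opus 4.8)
The plan is to prove the proposition by downward induction on $k$, driven entirely by the three-term recurrence \eqref{eq:recurrence} and the initial data \eqref{eq:initialcondition} from Proposition \ref{prop:recurr}. The first move is to factor the rational coefficients so their signs become transparent. Writing $s=\alpha+\beta+1$, a direct computation gives the factorizations
\begin{equation*}
n(n+\alpha+\beta+1)-k^2-(\alpha+\beta+1)k=(n-k)(n+k+\alpha+\beta+1),
\end{equation*}
\begin{equation*}
n(n+\alpha+\beta+1)-(k+2)^2+(\alpha+\beta+1)(k+2)=(n+k+2)(n-k-1+\alpha+\beta).
\end{equation*}
For the relevant range $0\le k\le n-2$ and under the standing hypothesis $\alpha+\beta\ge-1$, the denominator $(n-k)(n+k+\alpha+\beta+1)$ is strictly positive; in the numerator of the $d_{k+2,n}$-coefficient the factor $n+k+2$ is positive while $n-k-1+\alpha+\beta\ge n-k-2\ge0$, vanishing exactly when $k=n-2$ and $\alpha+\beta=-1$. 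Thus the $d_{k+2,n}$-coefficient is nonnegative, and the $d_{k+1,n}$-coefficient $2(\alpha-\beta)(k+1)/[(n-k)(n+k+\alpha+\beta+1)]$ carries the sign of $\alpha-\beta$.

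With these sign determinations the cases are routine. For item~(i), both initial values are positive: $d_{n,n}>0$ is immediate from \eqref{eq:initialcondition}, and $d_{n-1,n}>0$ because $\alpha-\beta>0$ and the Gamma factors are positive (here $\alpha+\beta\ge-1$ and $n\ge1$ guarantee $2n+\alpha+\beta>0$ and $n+\alpha+\beta+1>0$). Since in \eqref{eq:recurrence} the $d_{k+1,n}$-coefficient is strictly positive and the $d_{k+2,n}$-coefficient is nonnegative, the implication $d_{k+1,n},d_{k+2,n}>0\Rightarrow d_{k,n}>0$ holds, and downward induction gives $d_{k,n}>0$ for all $k$. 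For item~(ii), I would set $e_{k,n}:=(-1)^{n-k}d_{k,n}$ and multiply \eqref{eq:recurrence} by $(-1)^{n-k}$; this yields a recurrence for $e_{k,n}$ whose $e_{k+1,n}$-coefficient is $-2(\alpha-\beta)(k+1)/[(n-k)(n+k+\alpha+\beta+1)]>0$ (as $\alpha<\beta$) and whose $e_{k+2,n}$-coefficient is the same nonnegative quantity as before. Since $e_{n,n}=d_{n,n}>0$ and $e_{n-1,n}=-d_{n-1,n}>0$, the identical downward induction gives $e_{k,n}>0$, i.e.\ $(-1)^{n-k}d_{k,n}>0$.

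For item~(iii) one has $\alpha-\beta=0$, so the $d_{k+1,n}$-term disappears and \eqref{eq:recurrence} collapses to
\begin{equation*}
d_{k,n}=\frac{(n+k+2)(n-k-1+2\alpha)}{(n-k)(n+k+2\alpha+1)}\,d_{k+2,n},
\end{equation*}
coupling only coefficients with the same parity of $n-k$; moreover $d_{n-1,n}=0$. If $\alpha=\beta>-\tfrac12$ then $n-k-1+2\alpha>0$ on $0\le k\le n-2$, so the multiplier is strictly positive: propagating down from $d_{n,n}>0$ gives $d_{k,n}>0$ when $n-k$ is even, and propagating down from $d_{n-1,n}=0$ gives $d_{k,n}=0$ when $n-k$ is odd, which is \eqref{eq:dkn1}. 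If $\alpha=\beta=-\tfrac12$ then the numerator factor becomes $n-k-2$, which vanishes at $k=n-2$; hence $d_{n-2,n}=0$ and the recurrence then carries $0$ all the way down, so $d_{k,n}=0$ for $k=1,\dots,n-1$ while $d_{n,n}>0$, which is \eqref{eq:dkn2}. (As a cross-check for this last subcase, $P_n^{(-1/2,-1/2)}(z)$ is proportional to $T_n(z)=\tfrac12(u^n+u^{-n})$ by \eqref{eq:Cheb} and \eqref{eq:ChebyEllipse}, confirming that only $d_{n,n}$ survives.)

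I expect the only genuinely delicate point to be the bookkeeping around the numerator factor $n-k-1+\alpha+\beta$ at the boundary $\alpha+\beta=-1$: its vanishing at $k=n-2$ is precisely the mechanism that truncates the recurrence and produces the block of zero coefficients in \eqref{eq:dkn2}, so the argument must isolate exactly when this factor is zero versus strictly positive rather than treating it as uniformly nonnegative. Everything else reduces to the standard two-step downward induction once the two factorizations above are in hand.
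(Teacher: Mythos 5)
Your proof is correct, and for items (i) and (ii) it follows essentially the paper's own route: determine the signs of the two rational coefficients in the recurrence \eqref{eq:recurrence}, check the signs of the initial values \eqref{eq:initialcondition}, and run a downward induction (the paper phrases (ii) as the sequence being alternating, which is your $e_{k,n}=(-1)^{n-k}d_{k,n}$ substitution in disguise). Two points of genuine difference are worth recording. First, your factorizations $(n-k)(n+k+\alpha+\beta+1)$ and $(n+k+2)(n-k-1+\alpha+\beta)$ make visible something the paper glosses over: at the boundary $\alpha+\beta=-1$ with $k=n-2$, the coefficient of $d_{k+2,n}$ is \emph{zero}, not strictly positive as the paper asserts; your formulation (strictly positive coefficient on $d_{k+1,n}$, merely nonnegative coefficient on $d_{k+2,n}$) is the correct statement, and the induction survives because the first term alone forces strict positivity. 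So your version actually repairs a small inaccuracy in the paper's argument. Second, for item (iii) you depart from the paper entirely: the paper deduces \eqref{eq:dkn1} and \eqref{eq:dkn2} from the closed-form coefficients \eqref{eq:GegenbauerCoeff}, which rest on Watson's ${}_3\mathrm{F}_2$ summation theorem, whereas you run the degenerate two-term recurrence down the two parity chains starting from $d_{n,n}>0$ and $d_{n-1,n}=0$. Your route is self-contained (no hypergeometric identity needed) and it isolates the precise mechanism behind the Chebyshev degeneration --- the factor $n-k-1+2\alpha$ becoming $n-k-2$ and vanishing at $k=n-2$ when $\alpha=-\tfrac{1}{2}$ --- while the paper's route buys more, namely the explicit values of the nonzero coefficients rather than just their signs.
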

\begin{proof}
If $\alpha>\beta$ and $\alpha+\beta\geq-1$, it is easily seen that
\[
\frac{2(\alpha-\beta)(k+1)}{ n(n+\alpha+\beta+1) - k^2 -
(\alpha+\beta+1)k }=\frac{2(\alpha-\beta)(k+1)}{n^2-k^2+(\alpha+\beta+1)(n-k)}>0,
\]
and
\begin{multline}
\frac{  n(n+\alpha+\beta+1) - (k+2)^2 + (\alpha+\beta+1)(k+2) }{
n(n+\alpha+\beta+1) - k^2 - (\alpha+\beta+1) k }
\\
=\frac{  n^2 - (k+2)^2 + (\alpha+\beta+1)(n+k+2)}{
n^2-k^2+(\alpha+\beta+1)(n-k)} >0,
\end{multline}
for $0\leq k \leq n-2$.
These, together with the recurrence relation \eqref{eq:recurrence} and the fact that both of the
initial values $d_{n-1,n}$ and $d_{n,n}$ are positive (see \eqref{eq:initialcondition}), imply the
assertion in item $\rm (i)$.

Similarly, if $\alpha<\beta$ and $\alpha+\beta\geq-1$, we have
\[
\frac{2(\alpha-\beta)(k+1)}{ n(n+\alpha+\beta+1) - k^2 -
(\alpha+\beta+1)k }<0,
\]
and
\[
\frac{  n(n+\alpha+\beta+1) - (k+2)^2 + (\alpha+\beta+1)(k+2) }{
n(n+\alpha+\beta+1) - k^2 - (\alpha+\beta+1) k } >0,
\]
for $0\leq k \leq n-2$. Since $d_{n-1,n}<0$ and $d_{n,n}>0$ in this case, we again obtain from \eqref{eq:recurrence}
that $\{d_{k,n}\}_{k=0}^{n}$ is an alternating sequence, as required.

Finally, if $\alpha=\beta\geq -\frac{1}{2}$, the assertion in item $\rm (iii)$
follows immediately from \eqref{eq:GegenbauerCoeff}.

This completes the proof of Proposition \ref{prop:sign}.
\end{proof}

We are now ready to prove Theorem \ref{prop:location}.

\paragraph{Proof of Theorem \ref{prop:location}}
If $\alpha>\beta$ and $\alpha+\beta\geq-1$, we recall from item $\rm (i)$ in
Proposition \ref{prop:sign} that $d_{k,n}>0$ for $0\leq k \leq n$. On account of \eqref{eq:para} and
\eqref{eq:jacobi ellipse}, it is straightforward to see that
\[
 \left|P_{n}^{(\alpha,\beta)}(z)\right| = \left| \sum_{k = -n}^{n} d_{|k|,n} u^k \right| \leq \sum_{k = -n}^{n} d_{|k|,n}
 \rho^k = P_{n}^{(\alpha,\beta)}\left(\tfrac{1}{2}\left(\rho+\rho^{-1}\right)\right).
\]
Thus, $\max_{z\in \mathcal{E}_{\rho}} \left|P_{n}^{(\alpha,\beta)}(z)\right|$ can be achieved if and only if $u=\rho$, which is
\eqref{eq:maxcaseI}.

Similarly, if $\alpha<\beta$ and $\alpha+\beta\geq-1$, a combination of item $\rm (ii)$ in
Proposition \ref{prop:sign} and \eqref{eq:jacobi ellipse} implies that
\begin{multline*}
 \left|P_{n}^{(\alpha,\beta)}(z)\right| = \left| \sum_{k = -n}^{n} d_{|k|,n} u^k \right| \leq \sum_{k = -n}^{n} (-1)^{n-|k|} d_{|k|,n}
 \rho^k
\\= (-1)^n
 P_{n}^{(\alpha,\beta)}\left(-\tfrac{1}{2}\left(\rho+\rho^{-1}\right)\right)
=\left|P_{n}^{(\alpha,\beta)}\left(-\tfrac{1}{2}\left(\rho+\rho^{-1}\right)\right)\right|.
\end{multline*}
Hence, the maximum value can be achieved if and only if $u=-\rho$, as shown in \eqref{eq:maxCaseII}.

To show \eqref{eq:maxCaseIII}, we see from Remark \ref{rk:ultraspherical}, \eqref{eq:dkn1} and \eqref{eq:dkn2} that
\[
 \left|P_{n}^{(\alpha,\beta)}(z)\right| = \left| \sum_{k = 0}^{n} d_{|n-2k|,n} u^{n-2k} \right| \leq \sum_{k = 0}^{n} d_{|n-2k|,n}
 \rho^{n-2k}.
\]
Thus, the maximum value can be achieved when $u=\pm\rho$ and \eqref{eq:maxCaseIII} follows. Moreover,
if $\alpha=\beta>-\frac{1}{2}$, since $d_{|n-2k|,n}$ is strictly positive (see \eqref{eq:dkn1}) for $0\leq k \leq n$, the maximum value can only be achieved at two endpoints of the major axis.

This completes the proof of Theorem \ref{prop:location}.
\qed


\begin{remark}
For the very special case $\alpha=\beta=-\frac{1}{2}$, the Jacobi polynomials
(up to a normalization constant) are the Chebyshev polynomials of the first kind $T_n(x)$; see \eqref{eq:Cheb}. In this case,  we obtain from \eqref{eq:ChebyEllipse} that for $z\in \mathcal{E}_{\rho}$,
\begin{align}\label{eq:ChebyFirstMod}
|T_n(z)| &= \frac{1}{2} \sqrt{\rho^{2n} + \rho^{-2n} +
2\cos(2n\theta)} \nonumber\\
&\leq \frac{1}{2} \sqrt{\rho^{2n} + \rho^{-2n} + 2} = \frac{1}{2}(\rho^n + \rho^{-n}).
\end{align}
It is then clear that $\max_{z\in \mathcal{E}_{\rho}} |T_n(z)|$ is attained if and only if
$\cos(2n\theta) = 1$, i.e., at $2n$ points
\begin{align}\label{eq:location upper bound}
\hat{z}_{n,k} = \frac{1}{2} \left( \rho e^{i \frac{k\pi}{n} } +
\left(\rho e^{i \frac{k\pi}{n} } \right)^{-1} \right), \quad
k=0,\ldots, 2n-1.
\end{align}
\end{remark}


\subsection{Minimum value}\label{sec:min}

The identification of minimum value of $\left|P_{n}^{(\alpha,\beta)}(z)\right|$ on the Bernstein ellipse $\mathcal{E}_{\rho}$
is, in general, much more involved than that of its maximum value. Since the minimum value will be zero when $\rho=1$, we will restrict our attention to $\rho>1$ and focus on the ultraspherical case $\alpha=\beta$
in this section. In view of the relations \eqref{eq:gegenbauerdef} and \eqref{eq:Cheb}, the results will be presented in terms of $T_n(x)$, $U_n(x)$ and $C_n^{\lambda}(x)$.


To provide some intuition about the location where Gegenbauer polynomials attain the minimum value, we perform some numerical experiments of $\left|C_n^{\lambda}(z)\right|$ with $z\in \mathcal{E}_{\rho}$; see Figures \ref{fig:Gegenbauer1}--\ref{fig:Gegenbauer3} for different choices of the parameters $\lambda$, $n$ and $\rho$. Note that we omit the numerical results for $-\frac{1}{2}<\lambda<0$ and even $n \geq 2$, since they are similar to those shown in Figure \ref{fig:Gegenbauer3}. The numerical studies imply that the minimum value depends on the parameters $\lambda$, $n$, $\rho$, and further suggest the following conjecture concerning the observations.
\begin{conjecture}
It is conjectured that
\begin{enumerate}
\item[\rm (i)] If $\lambda>0$ and $n\geq1$ is odd, $\min_{z\in \mathcal{E}_{\rho}} \left| C_n^\lambda(z)\right|$ is attained at $\pm\frac{i}{2}(\rho-\rho^{-1})$ for $\rho>1$, i.e., at two endpoints of the minor axis.

\item[\rm (ii)] If $\lambda>0$ and $n\geq2$ is even, there exists a critical value $\varrho(n,\lambda)$ depending on the parameters $n$ and $\lambda$ such that $\min_{z\in \mathcal{E}_{\rho}} \left| C_n^\lambda(z)\right|$ is attained at $\pm\frac{i}{2}(\rho-\rho^{-1})$ for $\rho \geq
\varrho(n,\lambda)$.

\item[\rm (iii)] If $-\frac{1}{2}<\lambda<0$ and $n\geq2$,
there exists a critical value $\widetilde\varrho(n,\lambda)$ depending on the parameters $n$ and $\lambda$ such that $\min_{z\in \mathcal{E}_{\rho}} \left| C_n^\lambda(z)\right|$ is attained at $\pm\frac{1}{2}(\rho+\rho^{-1})$ for $\rho\geq \widetilde\varrho(n,\lambda)$, i.e., at two endpoints of the major axis.
\end{enumerate}

\end{conjecture}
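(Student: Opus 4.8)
The plan is to transfer everything to the parametrization $z=\tfrac12(u+u^{-1})$, $u=\rho e^{i\theta}$, and to study the single real function $F(\theta):=\bigl|C_n^\lambda(z)\bigr|^2$ for fixed $\rho>1$. Using the Gegenbauer specialization of Theorem~\ref{thm:jacobi ellipse} (equivalently \eqref{eq:XWZexpli}), write $C_n^\lambda(z)=\sum_{k=0}^n a_k\,u^{n-2k}$ with $a_k=g_k^\lambda g_{n-k}^\lambda$, noting that $a_k>0$ whenever $\lambda>0$. Squaring gives $F(\theta)=S_0+2\sum_{m=1}^{n}S_{2m}\cos(2m\theta)$, where each $S_{2m}=\sum a_k a_l\,\rho^{(n-2k)+(n-2l)}$ (summed over $k-l=\mp m$) is a positive autocorrelation sum when $\lambda>0$. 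Two symmetries reduce the search to $\theta\in[0,\tfrac\pi2]$: $F(-\theta)=F(\theta)$ because $C_n^\lambda$ has real coefficients, and $F(\theta+\pi)=F(\theta)$ because of \eqref{eq:symmetry relation}. In this window the right endpoint of the major axis is $\theta=0$ and an endpoint of the minor axis is $\theta=\tfrac\pi2$, so the conjecture becomes a statement about which endpoint minimizes $F$.

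For part~(i) I would prove that, when $n$ is odd and $\lambda>0$, $F$ is strictly decreasing on $(0,\tfrac\pi2)$, so that the minimum sits at $\theta=\tfrac\pi2$ for every $\rho>1$. Setting $t=\cos2\theta$ turns $F$ into a polynomial $P(t)$ of degree $n$, and monotonicity is equivalent to $P'(t)=2\sum_{m=1}^{n}m\,S_{2m}\,U_{m-1}(t)\ge0$ on $[-1,1]$, i.e.\ to the trigonometric positivity $\sum_{m=1}^n m\,S_{2m}\sin(m\psi)\ge0$ for $\psi\in(0,\pi)$. The heuristic reason this should hold for every $\rho$ in the odd case is that, as $\rho\to1^+$, the minor-axis endpoints collapse onto the origin, which is a zero of the odd polynomial $C_n^\lambda$; thus $|C_n^\lambda|$ is pinned small there, and I expect this dominance to persist for all $\rho>1$. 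To make it rigorous I would argue by induction on $n$ using the Gegenbauer three-term recurrence together with the coefficient recurrence \eqref{eq:recurrence} specialized to $\alpha=\beta$, taking as base cases $C_1^\lambda$ (done by hand) and $U_n=C_n^1$, for which \eqref{eq:ChebyEllipse} yields a closed form amenable to direct verification.

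The genuinely hard parts are (ii) and (iii), and the appearance of the thresholds $\varrho(n,\lambda)$ and $\widetilde\varrho(n,\lambda)$ already signals why the monotonicity argument must break down. For even $n$ the collapse-to-a-zero mechanism fails: at $\rho=1$ the minor-axis point is the origin, where $C_n^\lambda(0)\ne0$, so $\theta=\tfrac\pi2$ need not minimize $F$ when $\rho$ is close to $1$. Even the simplest model $Q(w)=c_0+c_1w+c_2w^2$ with positive coefficients and $c_2/c_0=\rho^4$ shows that $|Q(e^{i\psi})|$ is minimized at $\psi=\pi$ only once $\rho$ exceeds a critical value, matching the threshold in (ii). I would therefore locate the threshold by tracking the critical point of $F$ nearest $\theta=\tfrac\pi2$, for instance through the sign of $F''(\tfrac\pi2)$, combined with a global count ruling out competing interior minima. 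Part~(iii) is harder still: for $-\tfrac12<\lambda<0$ the factors $g_k^\lambda$, hence the $a_k$ and the $S_{2m}$, are no longer all of one sign, so the clean positivity underlying the whole scheme is lost and the minimizing endpoint switches to the major axis. Controlling this sign interplay, and proving that no interior minimum undercuts the endpoint beyond the threshold, is the main obstacle, and is precisely why (ii) and (iii) remain conjectural while (i) and the two Chebyshev families are within reach.
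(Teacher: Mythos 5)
First, a point of calibration: the statement you were asked to prove is labeled a \emph{conjecture} in the paper, and the paper itself does not prove it. It supports the conjecture with the numerical experiments of Figures \ref{fig:Gegenbauer1}--\ref{fig:Gegenbauer3} and then establishes only partial cases in Theorem \ref{thm:Gegenbauer}: items (i) and (ii) under the extra hypotheses $\rho\geq\rho_2^*=\tfrac12(\sqrt{2}+\sqrt{6})$ and ($\lambda>1$, or $0<\lambda<1$ with $n$ odd), by writing $1/|C_n^\lambda(z)|$ via \eqref{eq:Gegenxk}--\eqref{eq:1Cn} as a product of rational factors $\left|(z^2-y_k^2)/(z^2-(x_k^\lambda)^2)\right|$ times $1/|U_n(z)|$, maximizing each rational factor at the minor-axis endpoints by Schira's Lemma \ref{lemma:MinR} (the interlacing $x_k^\lambda<y_k$ coming from \eqref{eq:decrease}), and invoking Theorem \ref{thm:ChebyUMin} for the $U_n$ factor; item (iii) is proved nowhere. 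So your proposal must be judged as an attack on an open problem, and your decision to leave (ii) and (iii) open is consistent with the paper's own state of knowledge.

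The genuine gap is in your route to item (i): the claim that $F(\theta)=\left|C_n^\lambda(z)\right|^2$ is strictly decreasing on $(0,\tfrac{\pi}{2})$ for every $\rho>1$, every $\lambda>0$ and odd $n$ --- equivalently your sine positivity $\sum_m m\,S_{2m}\sin(m\psi)\geq0$ --- is simply false, so no induction can establish it. Two regimes kill it. (a) Small $\lambda$: monotonicity is invariant under normalization, and as $\lambda\to0^+$ the normalized Gegenbauer polynomial tends to $T_n$, for which \eqref{eq:ChebyEllipse} gives $4|T_n(z)|^2=\rho^{2n}+\rho^{-2n}+2\cos(2n\theta)$; this oscillates with period $\pi/n$ and has interior local minima in $(0,\tfrac{\pi}{2})$ for every $n\geq3$ and \emph{every} $\rho>1$, so by continuity $F$ is non-monotone for all sufficiently small $\lambda>0$ as well. (b) Fixed $\lambda$, $\rho\to1^+$: the ellipse collapses onto $[-1,1]$ and $F$ dips near each of the $n$ zeros of $C_n^\lambda$ (visible in the left panel of Figure \ref{fig:Gegenbauer1}); even your proposed base case $U_n=C_n^1$ fails here, since by \eqref{def:varphi} one has $|U_n(z)|^2\to\left(\sin((n+1)\theta)/\sin\theta\right)^2$ as $\rho\to1^+$, which vanishes at $\theta=k\pi/(n+1)$. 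The conjecture asserts that the value at $\theta=\tfrac{\pi}{2}$ undercuts all of these interior local minima, which is a global statement that a derivative-sign argument cannot capture; this is precisely why the paper's Chebyshev proofs (Theorems \ref{thm:Cheby1kind} and \ref{thm:ChebyUMin}) avoid monotonicity and instead exploit the exact ratio \eqref{def:varphi}, whose numerator and denominator are extremized \emph{simultaneously} at $\theta=\tfrac{\pi}{2}$ exactly when $n$ is odd, and why the general Gegenbauer case is handled by factorization into rational pieces rather than by monotonicity. Your autocorrelation expansion of $F$ and the symmetry reduction to $[0,\tfrac{\pi}{2}]$ are correct as far as they go, but the monotonicity engine they are meant to drive does not exist.
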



In what follows, we shall prove items $\rm (i)$ and $\rm (ii)$ of the above conjecture
under the assumptions that $\rho\geq \frac{1}{2}(\sqrt{2}+\sqrt{6})\approx 1.932$ and $\lambda\geq1$ (for item $\rm (ii)$).

\begin{figure}[ht]
\centering
\includegraphics[width=4.8cm]{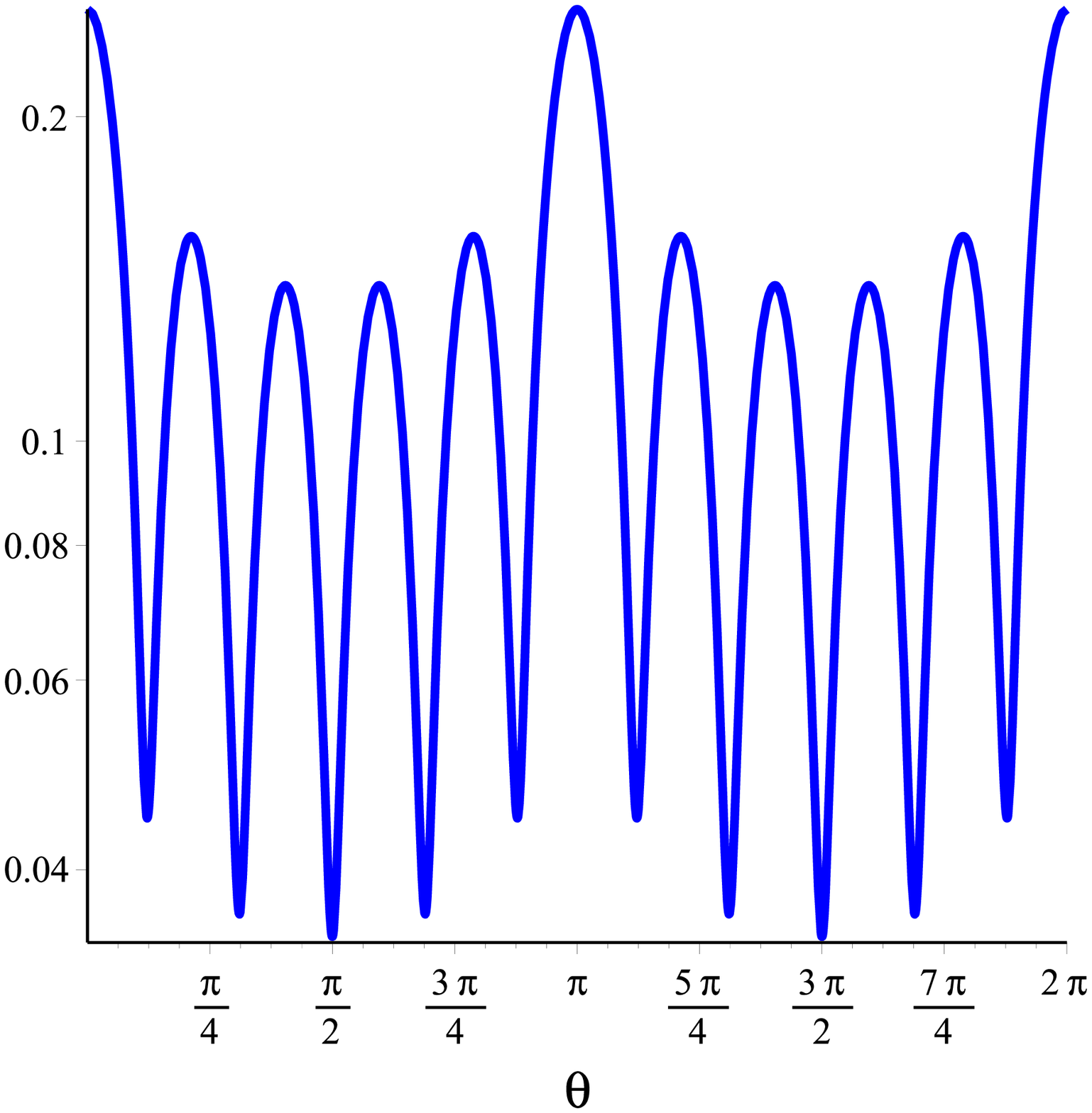}~~
\includegraphics[width=4.8cm]{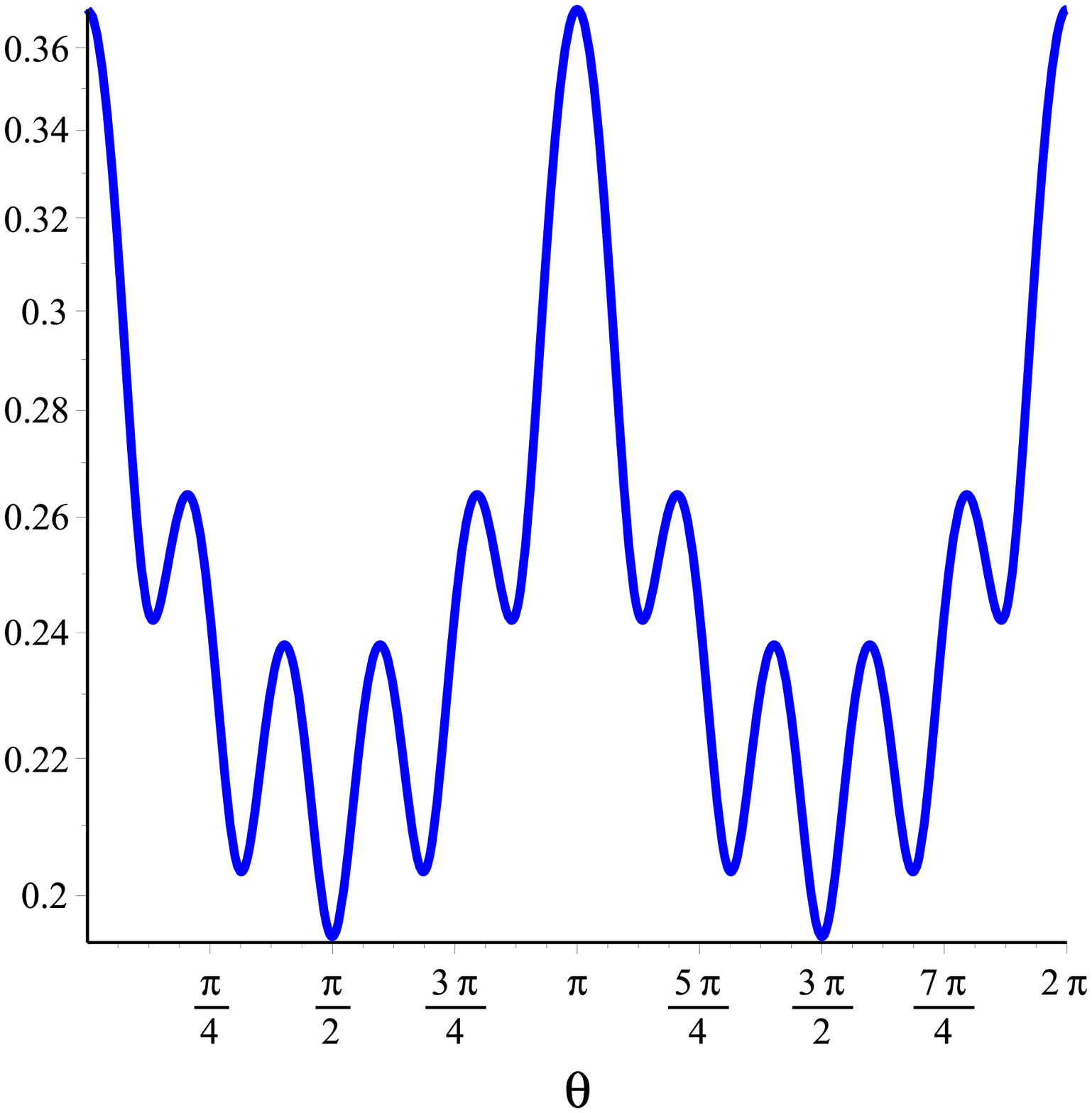}~~
\includegraphics[width=4.8cm]{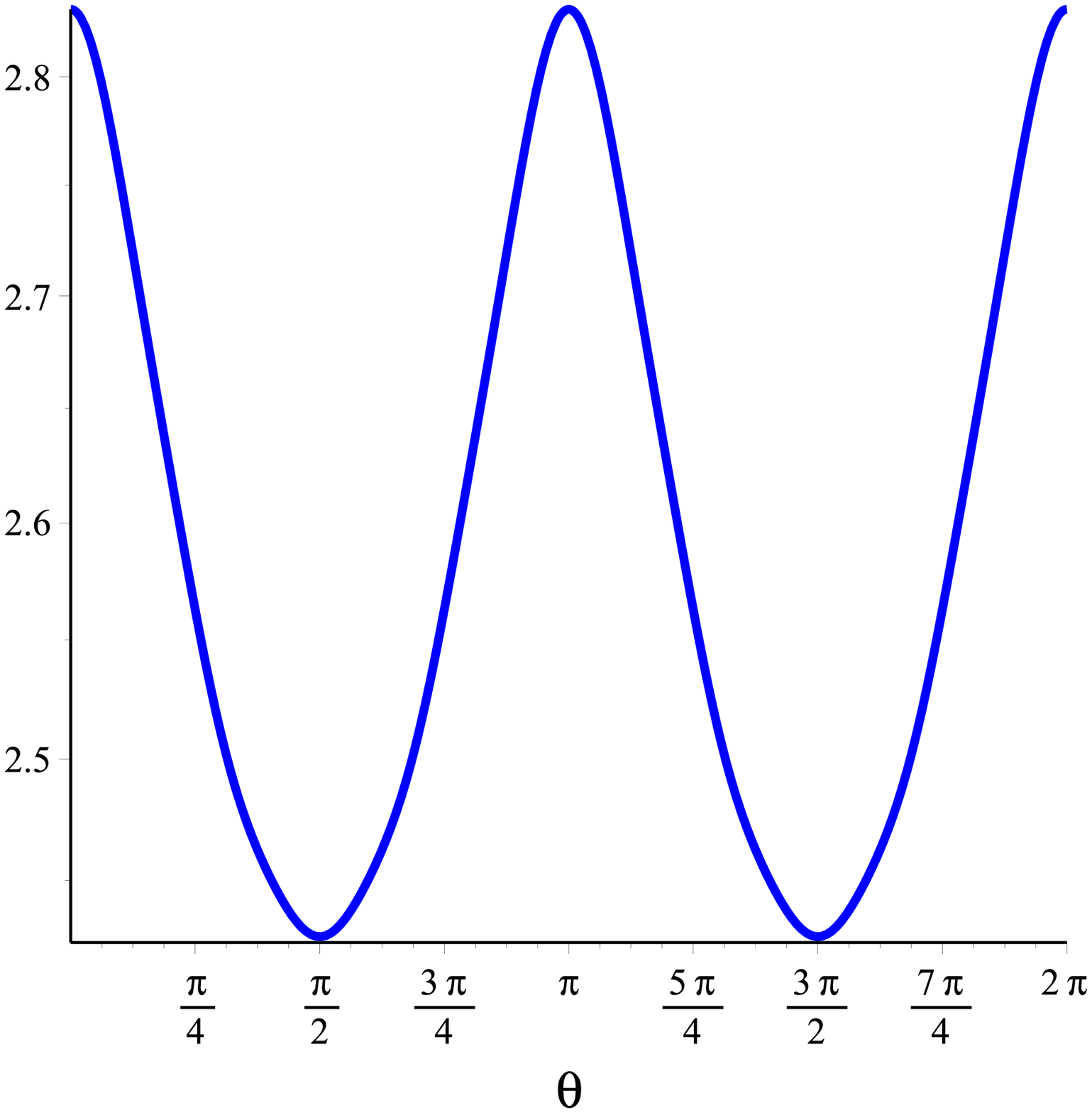}
\caption{Plot of
$\left|C_{5}^{1/4}(z)\right|$ with $z = \frac{1}{2}(\rho e^{i\theta} +
\rho^{-1} e^{-i\theta}) \in \mathcal{E}_{\rho}$ for $\rho=1.05$
(left), $\rho=1.25$ (middle) and $\rho=2$ (right). Here $\theta$
ranges from $0$ to $2\pi$. }\label{fig:Gegenbauer1}
\end{figure}

\begin{figure}[ht]
\centering
\includegraphics[width=4.8cm]{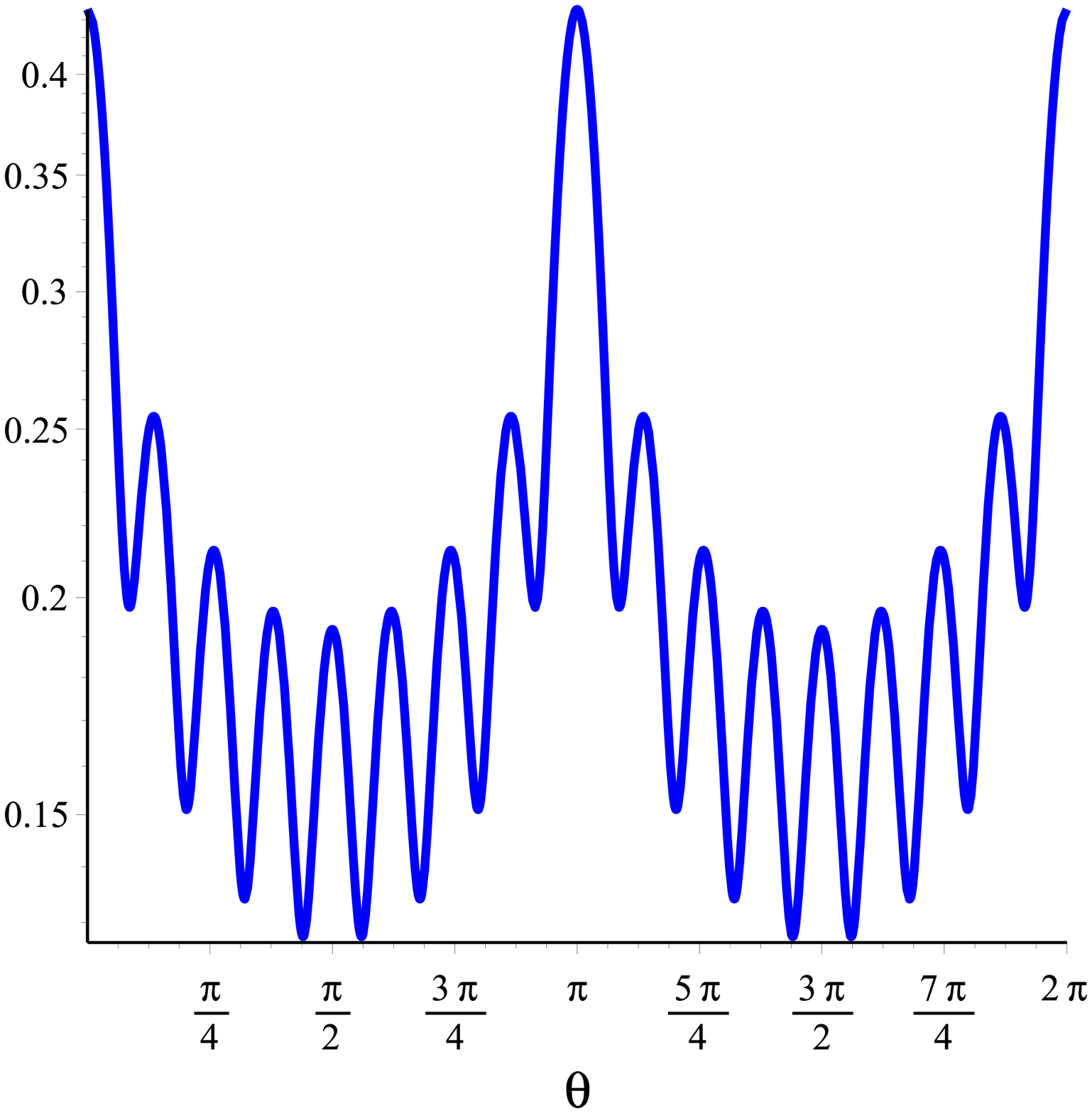}~~
\includegraphics[width=4.8cm]{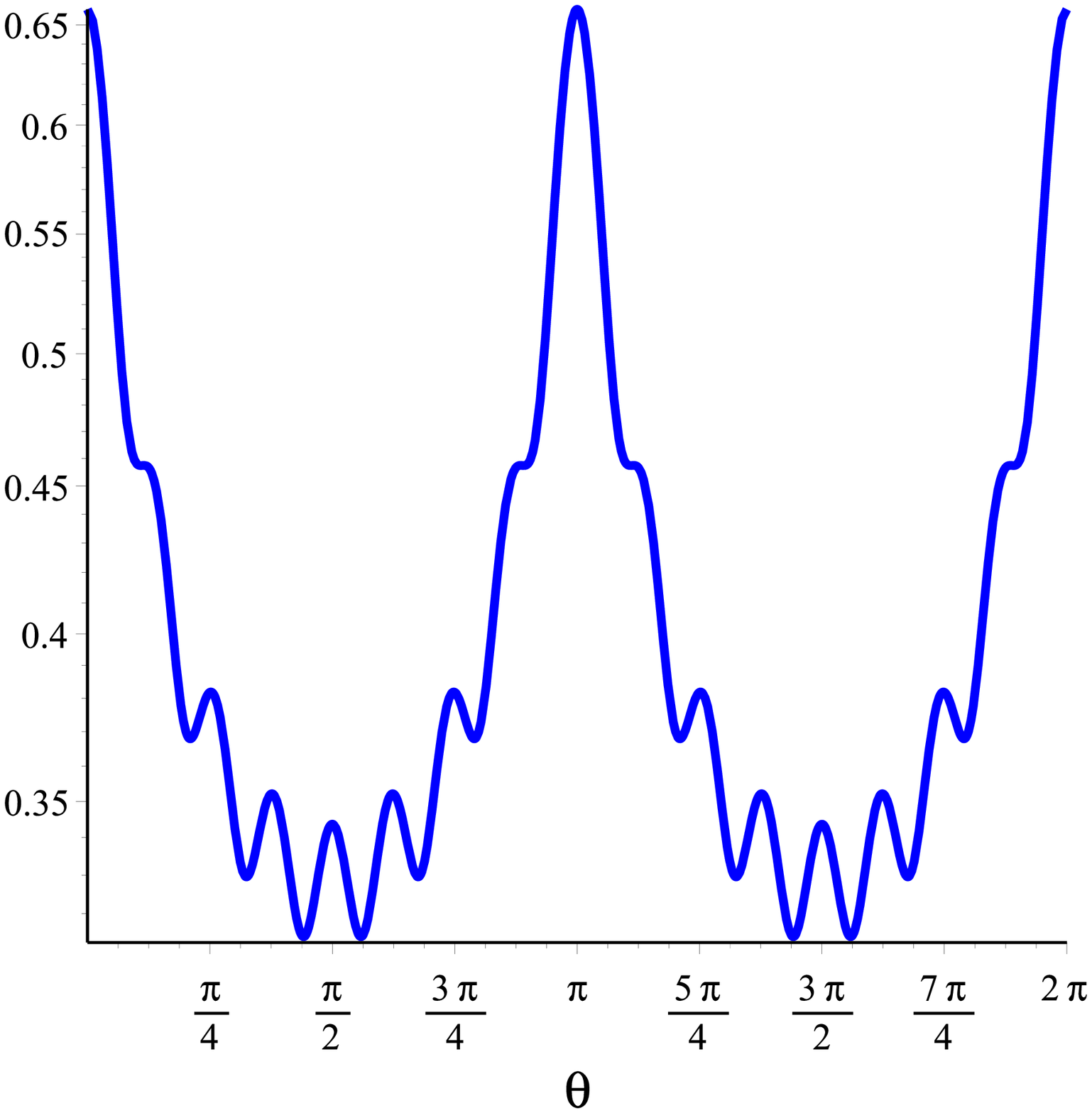}~~
\includegraphics[width=4.8cm]{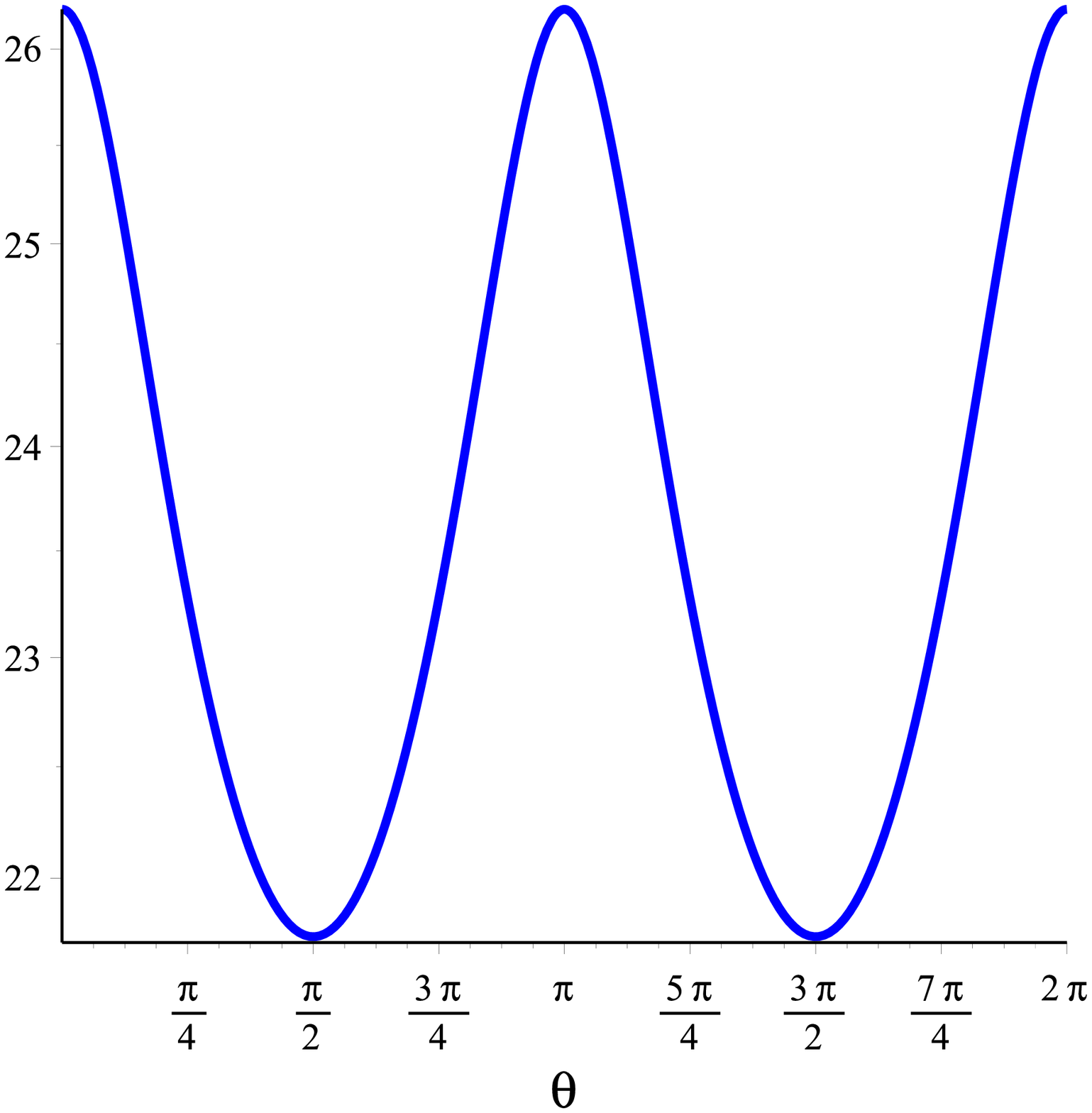}
\caption{Plot of
$\left|C_{8}^{1/3}(z)\right|$ with $z = \frac{1}{2}(\rho e^{i\theta} +
\rho^{-1} e^{-i\theta}) \in \mathcal{E}_{\rho}$ for $\rho=1.1$
(left), $\rho=1.2$ (middle) and $\rho=2$ (right). Here $\theta$
ranges from $0$ to $2\pi$. }\label{fig:Gegenbauer2}
\end{figure}

\begin{figure}[ht]
\centering
\includegraphics[width=4.8cm]{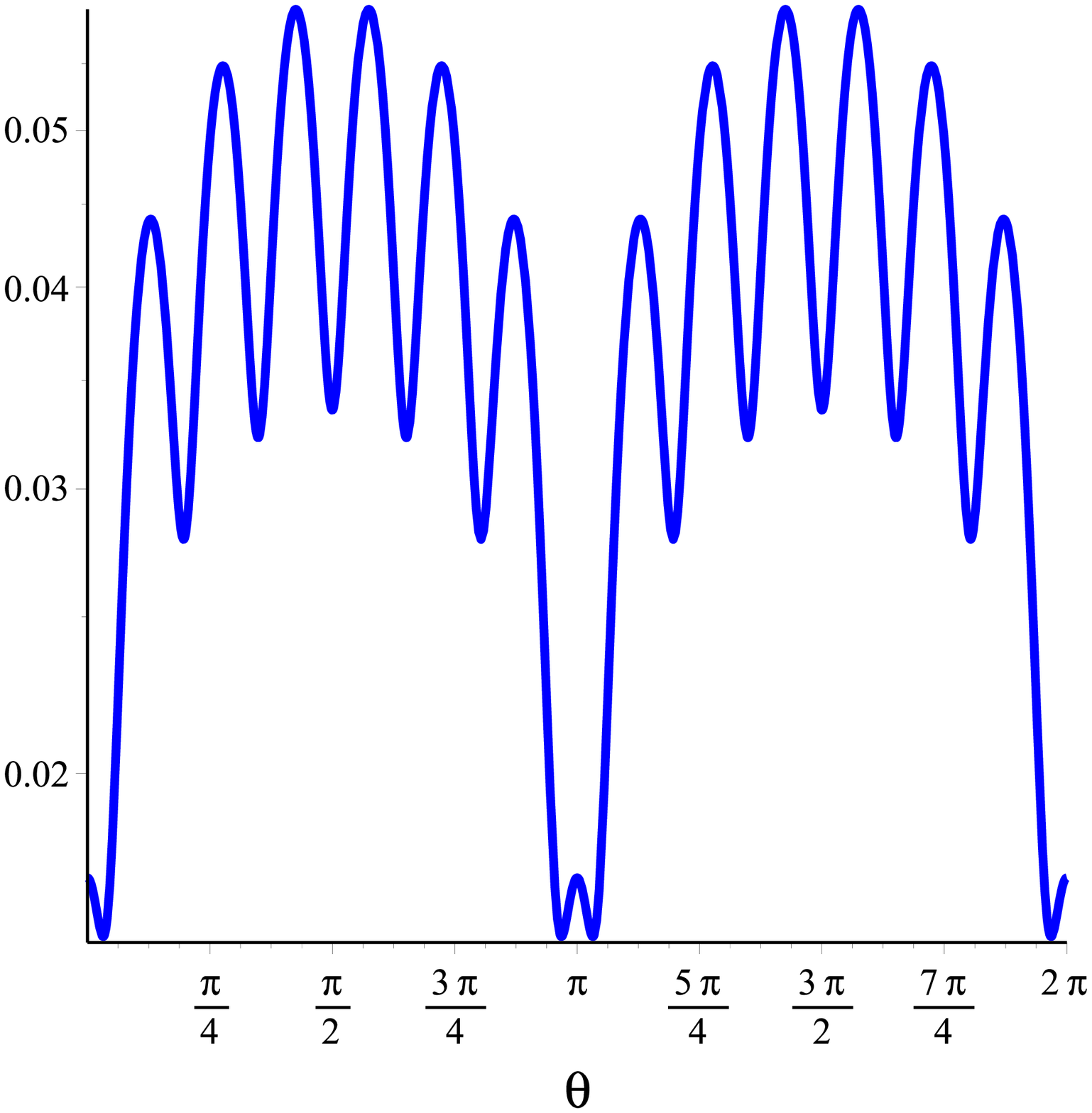}~~
\includegraphics[width=4.8cm]{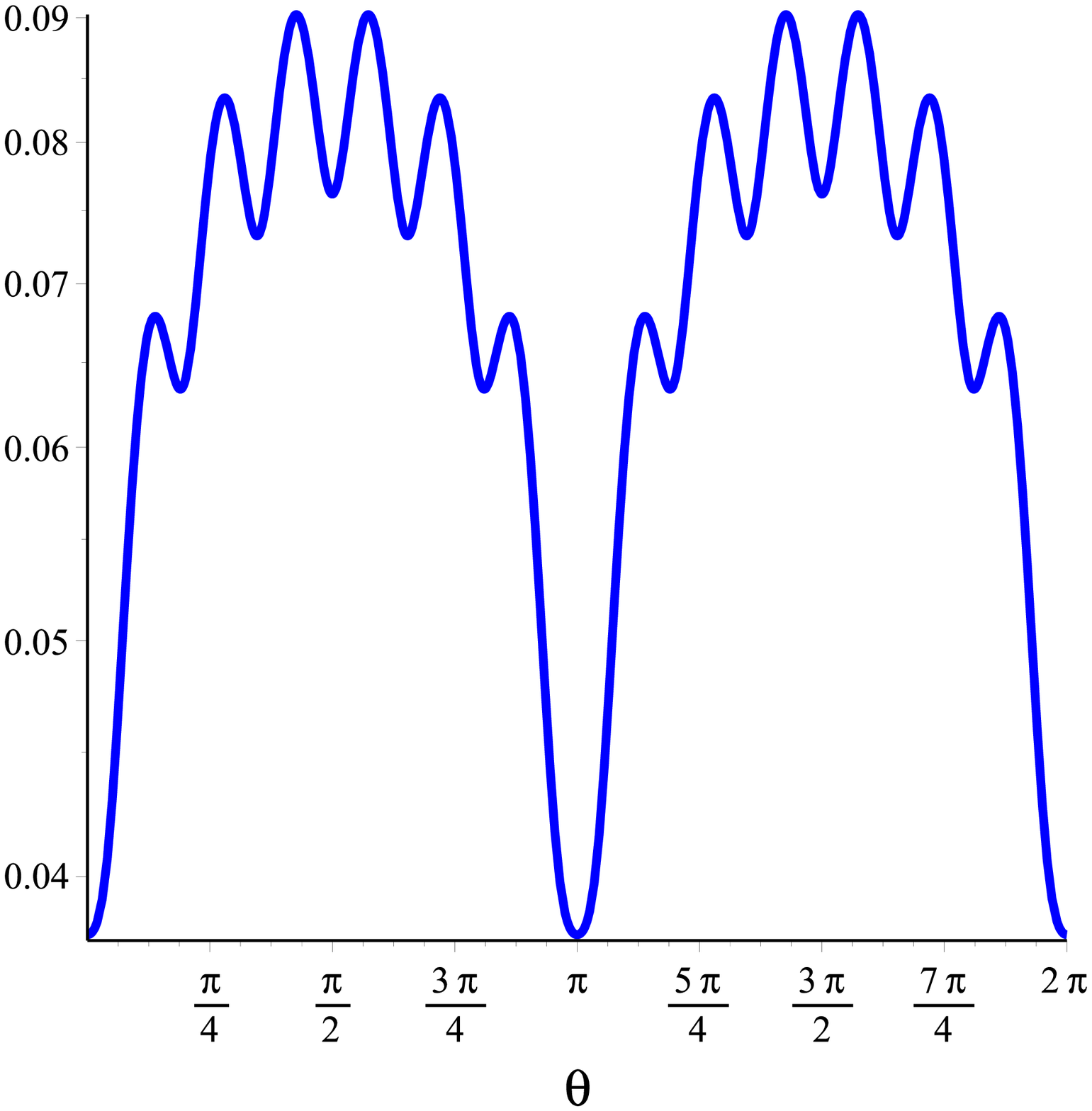}~~
\includegraphics[width=4.8cm]{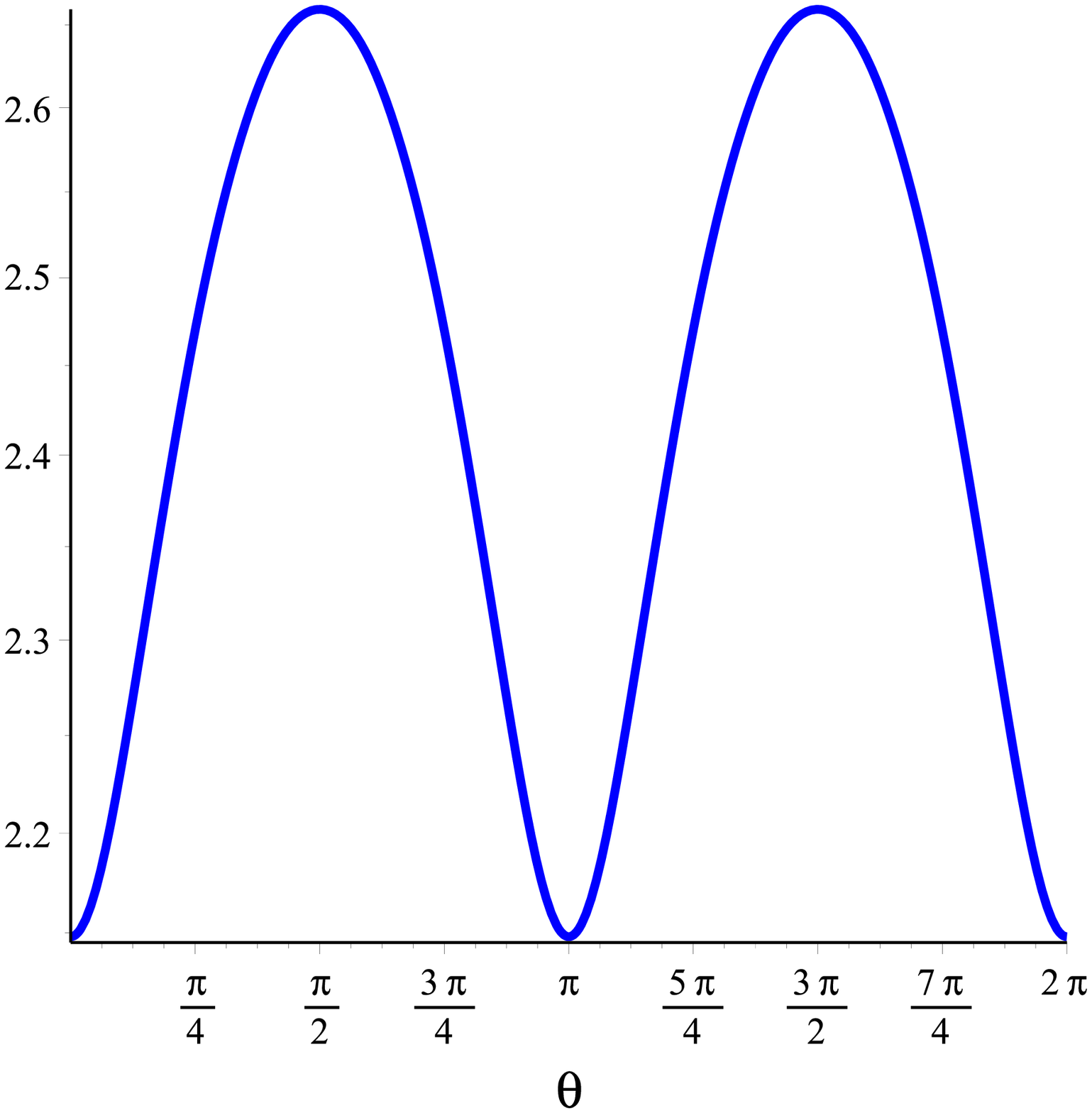}
\caption{Plot of
$\left|C_{7}^{-1/3}(z)\right|$ with $z = \frac{1}{2}(\rho e^{i\theta} +
\rho^{-1} e^{-i\theta}) \in \mathcal{E}_{\rho}$ for $\rho=1.1$
(left), $\rho=1.2$ (middle) and $\rho=2$ (right). Here $\theta$
ranges from $0$ to $2\pi$. }\label{fig:Gegenbauer3}
\end{figure}

We first deal with the Chebyshev polynomials of the first and second
kinds. The relevant results, on one hand, will provide important
insights for the general case, on the other hand they are crucial in
further analysis.

\begin{theorem}[Minimum value of Chebyshev polynomials of the first kind $T_n(z)$]\label{thm:Cheby1kind}
For $\rho>1$ and $n \geq 1$, we have
\begin{align}
\min_{z\in \mathcal{E}_{\rho}} \left| T_n(z) \right| = \frac{1}{2}
\left(\rho^n - \rho^{-n} \right),
\end{align}
and the minimum value is attained at $2n$ points
\begin{align}\label{eq:location lower bound}
\check{z}_{n,k} = \frac{1}{2} \left( \rho e^{i \frac{2k+1}{2n}\pi }
+ \left(\rho e^{i \frac{2k+1}{2n}\pi } \right)^{-1} \right), \quad k
= 0,\ldots,2n-1.
\end{align}
\end{theorem}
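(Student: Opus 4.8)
The plan is to exploit the simple parametric representation \eqref{eq:ChebyEllipse} of $T_n$ on $\mathcal{E}_{\rho}$, which reduces the problem to an elementary trigonometric minimization that already appears (in the guise of a maximization) in the computation \eqref{eq:ChebyFirstMod}. First I would substitute $u = \rho e^{i\theta}$ into $T_n(z) = \frac{1}{2}(u^n + u^{-n})$, so that
\[
T_n(z) = \frac{1}{2}\left(\rho^n e^{in\theta} + \rho^{-n} e^{-in\theta}\right).
\]
Taking the modulus squared and expanding, the cross term simplifies because $\rho^n \cdot \rho^{-n} = 1$, yielding
\[
|T_n(z)|^2 = \frac{1}{4}\left(\rho^{2n} + \rho^{-2n} + 2\cos(2n\theta)\right),
\]
which is exactly the quantity under the square root in \eqref{eq:ChebyFirstMod}.

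Next I would minimize over $\theta \in [0,2\pi)$. Since the term $\rho^{2n} + \rho^{-2n}$ is independent of $\theta$, minimizing $|T_n(z)|$ is equivalent to minimizing $\cos(2n\theta)$, whose minimum value $-1$ is attained precisely when $2n\theta = (2k+1)\pi$, that is, at the angles $\theta = \frac{(2k+1)\pi}{2n}$ for $k = 0,1,\ldots,2n-1$. Substituting $\cos(2n\theta) = -1$ gives
\[
|T_n(z)|^2 = \frac{1}{4}\left(\rho^{2n} + \rho^{-2n} - 2\right) = \frac{1}{4}\left(\rho^n - \rho^{-n}\right)^2 .
\]
Because $\rho > 1$ forces $\rho^n - \rho^{-n} > 0$, taking the positive square root produces the claimed minimum value $\frac{1}{2}(\rho^n - \rho^{-n})$, attained exactly at the $2n$ points $\check{z}_{n,k}$ associated with these angles via \eqref{eq:location lower bound}.

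There is no serious obstacle here: once the parametrization \eqref{eq:ChebyEllipse} is in hand, the argument is a direct companion to the maximum computation of the preceding remark. The only points requiring minor care are the sign of the square root, which is guaranteed by $\rho > 1$, and the bookkeeping that the $2n$ distinct angles $\frac{(2k+1)\pi}{2n}$ indeed yield $2n$ distinct points on $\mathcal{E}_{\rho}$ (they do, since $\theta$ and its reflection produce distinct $u$ hence distinct $z$ away from the real axis, and none of these angles is a multiple of $\pi$).
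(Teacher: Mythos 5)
Your proof is correct and follows essentially the same route as the paper: both start from the parametric identity $|T_n(z)|^2 = \tfrac{1}{4}\left(\rho^{2n}+\rho^{-2n}+2\cos(2n\theta)\right)$ (the paper's equation \eqref{eq:ChebyFirstMod}) and observe that the minimum occurs exactly when $\cos(2n\theta)=-1$, i.e.\ at $\theta=\tfrac{2k+1}{2n}\pi$, $k=0,\ldots,2n-1$. Your additional remarks on the sign of the square root and the distinctness of the $2n$ points are harmless elaborations of what the paper leaves implicit.
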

\begin{proof}
From \eqref{eq:ChebyFirstMod} it is readily seen that $|T_n(z)|$ attains its
minimum value if and only if $$\cos(2n\theta) = -1,$$
i.e., for $\theta=\frac{2k+1}{2n}\pi$ with $k = 0,\ldots,2n-1$. The desired
results follow immediately.
\end{proof}

\begin{remark}\label{RK:ChebOdd}
From \eqref{eq:location lower bound}, it follows that, for odd $n$, the
minimum value of $T_n(z)$ can be attained at $\pm\frac{i}{2}(\rho-\rho^{-1}) $.
\end{remark}

We next proceed to the Chebyshev polynomial of the second kind $U_n(z)$. The following lower bound can be found
in \cite[formula (1.53)]{mason2003chebyshev}:
\begin{align}\label{eq:masonbound}
\left| U_n(z) \right| \geq \frac{\rho^{n+1} - \rho^{-n-1}}{\rho +
\rho^{-1}}, \quad z\in \mathcal{E}_{\rho}~~with~~ \rho>1.
\end{align}
Our next theorem shows that this lower bound is attainable
only when $n$ is odd. If $n$ is even, a new and attainable lower bound will be presented under some conditions on $\rho$.

\begin{theorem}[Minimum value of Chebyshev polynomials of the second kind $U_n(z)$]
\label{thm:ChebyUMin}
For $n\geq 1$, we have
\begin{align}\label{eq:lowoddcheby}
\min_{z\in \mathcal{E}_{\rho}} \left| U_n(z) \right|
&=\left\{
                                                        \begin{array}{ll}
                                                          {\displaystyle \frac{\rho^{n+1} - \rho^{-n-1}}{\rho + \rho^{-1}}}, & \hbox{if $n$ is odd and $\rho>1$,} \\[15pt]
                                                          {\displaystyle \frac{\rho^{n+1} + \rho^{-n-1}}{\rho + \rho^{-1}}},
& \hbox{if $n$ is even and $\rho\geq\rho_n^*$,}
                                                        \end{array}
                                                      \right.
\end{align}
where $\rho^{*}_n>1$ is the unique root of the equation
\begin{align}\label{eq:rhonequ}
 a_{n+1}(\rho) - (n+1) a_1(\rho) = 0,
\end{align}
and where
\begin{align}\label{def:akrho}
a_k(\rho) = \frac{1}{2} \left( \rho^k + \rho^{-k} \right), \qquad k\geq 0.
\end{align}
Moreover, in both cases the the minimum value is attained if and only if $z = \pm
\frac{i}{2}\left(\rho-\rho^{-1}\right)$, i.e., at two endpoints of the minor axis.

\end{theorem}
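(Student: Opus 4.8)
The plan is to turn the minimization over the ellipse into a one--variable problem with an explicit closed form. First I would insert the parametrization \eqref{eq:ChebyEllipse} with $u=\rho e^{i\theta}$ and compute $|U_n(z)|^2$. Using $|u^{m}-u^{-m}|^2=\rho^{2m}+\rho^{-2m}-2\cos(2m\theta)=2a_{2m}(\rho)-2\cos(2m\theta)$ for the numerator ($m=n+1$) and denominator ($m=1$), this gives
\[
|U_n(z)|^2=\frac{a_{2n+2}(\rho)-\cos\bigl(2(n+1)\theta\bigr)}{a_2(\rho)-\cos(2\theta)} ,
\]
whose denominator is positive since $a_2(\rho)>1$ for $\rho>1$. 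The key observations are that $a_{2k}(\rho)=T_k\!\bigl(a_2(\rho)\bigr)$ (take $w=\rho^2$ in $T_k(\tfrac12(w+w^{-1}))=\tfrac12(w^k+w^{-k})$) and that $\cos(2(n+1)\theta)=T_{n+1}(\cos2\theta)$. Writing $A:=a_2(\rho)>1$ and $c:=\cos2\theta\in[-1,1]$, the quantity $|U_n(z)|^2$ becomes the divided difference $G(c):=\bigl(T_{n+1}(A)-T_{n+1}(c)\bigr)/(A-c)$, i.e. the slope of the chord of $T_{n+1}$ joining $c$ and $A$. Since $c=-1$ corresponds exactly to $\theta=\pi/2,3\pi/2$, the two endpoints of the minor axis, the theorem reduces to showing that $G$ attains its minimum on $[-1,1]$ uniquely at $c=-1$.

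I would then record the elementary secant--line identity $G(c)-G(-1)=\Phi(c)/(A-c)$, where $\Phi(c):=\ell(c)-T_{n+1}(c)$ and $\ell$ is the affine interpolant of $T_{n+1}$ at $c=-1$ and $c=A$. As $A-c>0$ on $[-1,1]$, everything reduces to proving $\Phi\ge0$ there, with equality only at $c=-1$; geometrically, that the graph of $T_{n+1}$ stays below its chord through $-1$ and $A$. For $n$ odd we have $T_{n+1}(-1)=1=\max_{[-1,1]}T_{n+1}$, so the chord (increasing, since $T_{n+1}(A)>1$) obeys $\ell(c)\ge\ell(-1)=1\ge T_{n+1}(c)$ throughout $[-1,1]$; hence $\Phi\ge0$ immediately, with equality forcing $\ell(c)=1$, i.e. $c=-1$. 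Evaluating $G(-1)$ and simplifying then reproduces --- and pins down the equality case of --- Mason's bound \eqref{eq:masonbound}.

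The even case is the real obstacle, because at $c=-1$ the denominator of $G$ is largest (helpful) but the numerator is also largest, since now $T_{n+1}(-1)=-1$ is the minimum of $T_{n+1}$; the two effects genuinely compete and the outcome depends on $\rho$. Here $\Phi(-1)=0$ and $\Phi'(c)=m-T_{n+1}'(c)$, where $m:=(T_{n+1}(A)+1)/(A+1)$ is the chord slope. The decisive step is the classical Markov-type bound $T_{n+1}'(c)=(n+1)U_n(c)\le(n+1)^2$ on $[-1,1]$: once $m\ge(n+1)^2$ we get $\Phi'\ge0$, so $\Phi$ is nondecreasing and $\Phi\ge\Phi(-1)=0$, with $\Phi'>0$ on $(-1,1)$ giving attainment only at $c=-1$.

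It remains to identify the threshold. Using $T_{n+1}(A)+1=2a_{n+1}(\rho)^2$ and $A+1=2a_1(\rho)^2$ (both by the same squaring as above, with $a_k$ as in \eqref{def:akrho}), the condition $m\ge(n+1)^2$ is exactly $a_{n+1}(\rho)\ge(n+1)a_1(\rho)$, i.e. $\rho\ge\rho_n^*$ with $\rho_n^*$ a root of \eqref{eq:rhonequ}. To see this root is the unique one in $(1,\infty)$ and that the inequality holds precisely for $\rho\ge\rho_n^*$, I would check that $\psi(\rho):=a_{n+1}(\rho)-(n+1)a_1(\rho)$ satisfies $\psi(1)=-n<0$, $\psi(\rho)\to+\infty$, and $\psi'(\rho)=\tfrac{n+1}{2}\bigl[(\rho^n-1)+\rho^{-2}(1-\rho^{-n})\bigr]>0$ for $\rho>1$, so that $\psi$ is strictly increasing with a single sign change. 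Mapping $c=-1$ back to $\theta\in\{\pi/2,3\pi/2\}$ then yields the stated value $\tfrac{\rho^{n+1}\pm\rho^{-n-1}}{\rho+\rho^{-1}}$ and attainment exactly at the two endpoints of the minor axis.
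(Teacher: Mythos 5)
Your proof is correct, and its skeleton coincides with the paper's: both reduce to the one-variable function $|U_n(z)|^2=\bigl(a_{2n+2}(\rho)-\cos((2n+2)\theta)\bigr)/\bigl(a_2(\rho)-\cos2\theta\bigr)$, compare against the value at $\theta=\pi/2$, and identify the threshold through the condition $a_{n+1}(\rho)\ge(n+1)a_1(\rho)$. The differences are mostly in the dressing. Your secant identity $G(c)-G(-1)=\Phi(c)/(A-c)$ in the variable $c=\cos2\theta$ is algebraically the same as the paper's factorization \eqref{eq:minevencase}, since $1+T_{n+1}(c)=2\cos^2((n+1)\theta)$ and $1+c=2\cos^2\theta$; and your key inequality $T_{n+1}'(c)=(n+1)U_n(c)\le(n+1)^2$ is literally the same fact as the paper's bound \eqref{eq:maxcos} on $|\cos((n+1)\theta)/\cos\theta|=|U_n(\sin\theta)|$. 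The one genuine methodological divergence is how that fact is deployed for even $n$: the paper bounds the divided difference $(1+T_{n+1}(c))/(1+c)$ pointwise by $(n+1)^2$, whereas you bound the derivative and integrate ($\Phi(-1)=0$, $\Phi'\ge0$), the two being connected by the mean value theorem. What your route buys is a clean geometric picture (minimizing $|U_n|^2$ over $\mathcal{E}_{\rho}$ is minimizing a chord slope of $T_{n+1}$), a unified treatment of both parities, and an explicit monotonicity proof that $\psi(\rho)=a_{n+1}(\rho)-(n+1)a_1(\rho)$ has a unique root $\rho_n^*>1$ with $\psi\ge0$ precisely for $\rho\ge\rho_n^*$ --- a point the paper passes over, asserting only that $a_{n+1}/a_1$ is increasing. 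One small gap you should close: your claim that the minimum is attained \emph{only} at $c=-1$ needs the strict inequality $|U_n(c)|<n+1$ for $c\in(-1,1)$, so that $\Phi'>0$ on $(-1,1)$ even in the borderline case $\rho=\rho_n^*$, where your lower bound $m-(n+1)^2$ for $\Phi'$ vanishes; this is standard (from $|\sin((n+1)\phi)|<(n+1)|\sin\phi|$ for $\phi\in(0,\pi)$) but is used without mention, and the paper's uniqueness statement after \eqref{eq:maxcos} relies on the same strictness.
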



The above theorem can actually be seen from a remarkable connection
between $U_n(z)$ and the kernel $K_n(z)$ arising in the contour
integral representation of the remainder term of an $n$-point Gauss
quadrature for the Chebyshev weight function of the second kind.
More precisely, let $f$ be an analytic function on and within the
Bernstein ellipse $\mathcal{E}_{\rho}$. The Gaussian quadrature rule
for the Chebyshev weight function of the second kind reads
\begin{equation}
\int_{-1}^{1}f(t)(1-t^2)^{1/2}dt=\sum_{k=1}^n\lambda_k^{(n)}f\left(\tau_k^{(n)}\right)+R_n(f),
\end{equation}
where $\tau_k^{(n)}=\cos(k\pi/(n+1))$ are the zeros of the Chebyshev polynomial of the second kind $U_n(z)$, and $\lambda_k^{(n)}$ are the corresponding Christoffel numbers. The remainder term $R_n(f)$ admits the following contour representation:
\begin{equation}\label{eq:Rn}
R_n(f)=\frac{1}{2\pi i}\oint_{\mathcal{E}_{\rho}}K_n(z)f(z)dz,
\end{equation}
where the kernel $K_n(z)$ is given by
$$K_n(z)=\frac{q_n(z)}{U_n(z)}=\frac{\pi}{u^{n+1} U_n(z)},\quad z=\frac{1}{2}\left(u+u^{-1}\right), \quad |u|=\rho,$$
where $q_n(z)=\int_{-1}^{1}\frac{U_n(t)(1-t^2)^{1/2}}{z-t}dt$ and
the second equality follows from \cite[Equation 3.613.3]{Grad}. The
above formula particularly implies that $|U_n(z)|^2$ is proportional
to the reciprocal of $|K_n(z)|^2$ if $z\in \mathcal{E}_{\rho}$.
Since the maximum value of $|K_n(z)|$ over $\mathcal{E}_{\rho}$ has
been studied in the context of estimating the remainder term $R_n$
in \cite{gautschi83,gautschi1990note}, Theorem \ref{thm:ChebyUMin}
follows directly from \cite[Theorem 5.2]{gautschi83} and
\cite[Theorem 1]{gautschi1990note}. For completeness, we include a
more direct proof in what follows.

%

\paragraph{Proof of Theorem \ref{thm:ChebyUMin}}
By \eqref{eq:ChebyEllipse}, it is readily seen that
\begin{equation}\label{def:varphi}
|U_n(z)|^2 = \frac{a_{2n+2}(\rho) - \cos(( 2n+2 )\theta)}{ a_2(\rho)
- \cos(2\theta)},\quad z=\frac{1}{2} \left( \rho e^{i\theta} + \rho^{-1} e^{-i\theta} \right)\in\mathcal{E}_{\rho}.
\end{equation}
Denote by $\varphi_n(\theta)$ the function appearing on the right hand side of \eqref{def:varphi}. It is then equivalent to consider the minimum of $\varphi_n(\theta)$ for $\theta\in [0,2\pi]$.

We start with the easy case that $n$ is odd. By \eqref{def:varphi}, it is clear that
\[
\varphi_n(\theta) \geq  \frac{a_{2n+2}(\rho) - 1}{ a_2(\rho) +1} =
\left( \frac{\rho^{n+1} - \rho^{-n-1}}{\rho + \rho^{-1}} \right)^2,
\]
and the lower bound on the right hand side is attained if and only
if $\cos((2n+2)\theta)=1$ and $\cos(2\theta)=-1$, which gives
$\theta=\frac{\pi}{2}$ or $\frac{3}{2}\pi$. Thus, the minimum can only be
attained at two endpoints of the minor axis, i.e., at the points
$z=\pm\frac{i}{2}(\rho-\rho^{-1})$, as desired.

If $n$ is even, a straightforward calculation shows that
\begin{align} \label{eq:minevencase}
\varphi_n(\theta) -
\varphi_n\left(\frac{\pi}{2}\right) &= \frac{a_{2n+2}(\rho) - \cos((
2n+2 )\theta)}{ a_2(\rho) - \cos(2\theta)} - \frac{a_{2n+2}(\rho) + 1}{ a_2(\rho) + 1} \nonumber \\
& = \frac{2(\cos\theta)^2}{a_2(\rho) - \cos(2\theta)} \left[
\frac{a_{2n+2}(\rho) + 1}{a_2(\rho) + 1} -
\left( \frac{\cos((n+1)\theta)}{\cos\theta} \right)^2 \right] \nonumber \\
& = \frac{2(\cos\theta)^2}{a_2(\rho) - \cos(2\theta)} \left[ \left(
\frac{a_{n+1}(\rho)}{a_1(\rho)} \right)^2 - \left(
\frac{\cos((n+1)\theta)}{\cos\theta} \right)^2  \right].
\end{align}
To this end, we note that, on one hand,
\begin{align*}
\left| \frac{\cos(n+1)\theta}{\cos\theta} \right| = \left|
\frac{\sin(n+1) (\frac{\pi}{2}-\theta)}{\sin (\frac{\pi}{2}-\theta)}
\right| = |U_n\left(t\right)|,
\end{align*}
where $t=\cos\left(\frac{\pi}{2}-\theta\right)$. It then follows from the property of $U_n(z)$ that
\begin{align}\label{eq:maxcos}
\max_{\theta\in [0,2\pi]} \left| \frac{\cos(n+1)\theta}{\cos\theta}
\right| = n+1,
\end{align}
and the upper bound can be attained if and only if $\theta=\frac{\pi}{2}$ or $\frac{3\pi}{2}$.
On the other hand, it is easily seen that the function $a_{n+1}(\rho)/a_1(\rho)$ is strictly increasing for
$\rho\in[1,\infty)$ and $n$ fixed. Hence, if $\rho\geq\rho_n^*$, we see from 
\eqref{eq:minevencase} and  \eqref{eq:maxcos} that
\[
\varphi_n(\theta) - \varphi_n\left(\frac{\pi}{2}\right) \geq 0.
\]
In addition, since
\[
\varphi_n\left(\frac{\pi}{2}\right) = \varphi_n\left(\frac{3\pi}{2}\right) = \frac{a_{2n+2}(\rho) +
1}{a_2(\rho) + 1} = \left( \frac{\rho^{n+1} + \rho^{-n-1}}{\rho +
\rho^{-1}} \right)^2,
\]
the second case in $\eqref{eq:lowoddcheby}$ follows.
It is also easy to see that the minimum is attained if and only if $\theta=\frac{\pi}{2}$
or $\frac{3\pi}{2}$.

This completes the proof of Theorem \ref{thm:ChebyUMin}.
\qed

\begin{remark}\label{rem:rhon}
For even $n$ and $1<\rho<\rho_n^{*}$, one can conclude from
\cite[Theorem~1]{gautschi1990note} that the minimum value of $\left|
U_n(z) \right|$ is attained at some $z^{*}=\frac{1}{2}(\rho
e^{i\theta^{*}} + \rho^{-1} e^{-i\theta^{*}})$ with $\theta^{*} \in
(\frac{n}{n+1} \frac{\pi}{2}, \frac{\pi}{2})$, which is slightly off
the imaginary axis. Moreover, from
\cite[Theorem~2]{gautschi1990note} we know that $\{ \rho_n^{*}
\}_{n=1}^{\infty}$ is a strictly decreasing sequence and $\rho_n^{*}
\rightarrow1$ as $n\rightarrow\infty$; see Figure \ref{fig:rho} for
an illustration.
\end{remark}

\begin{figure}[ht]
\centering
\includegraphics[width=7cm]{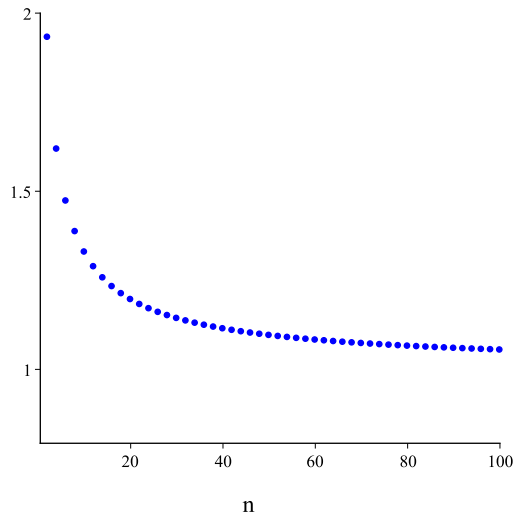}~~
\caption{Plot of the sequence $\{\rho_{n}^{*}
\}$ for $n=2,4,\ldots,100$.}
\label{fig:rho}
\end{figure}

We finally come to the Gegenbauer polynomials $C_n^{\lambda}(x)$.
Besides the trivial case\footnotemark[4] $n=1$, we have the
following theorem. \footnotetext[4]{If $n=1$,
$C_1^\lambda(x)=2\lambda x$. Thus, the minimum value of
$|C_1^\lambda(z)|$ can only be attained at two endpoints of the
minor axis $\pm \tfrac{i}{2}(\rho-\rho^{-1})$. }
\begin{theorem}[Minimum value of Gegenbauer polynomials $C_n^\lambda(z)$]\label{thm:Gegenbauer}
Let $\rho_2^{*}=\frac{1}{2}(\sqrt{2}+\sqrt{6})\approx 1.932$ be the
unique root of \eqref{eq:rhonequ} with $n=2$. For $\rho \geq
\rho_2^*$ and $n \geq 2$,  the minimum value of
$\left|C_n^\lambda(z)\right|$ is attained at two endpoints of the
minor axis, i.e.,
\begin{equation}\label{eq:minGegenbauer}
\min_{z\in \mathcal{E}_{\rho}} \left| C_n^\lambda(z)\right|= \left|C_n^{\lambda} \left(\pm \tfrac{i}{2}(\rho-\rho^{-1}) \right) \right|,
\end{equation}
provided $\lambda>1$, or $0<\lambda<1$ and $n$ is odd.
\end{theorem}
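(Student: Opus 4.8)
The plan is to show that for every $\theta$ the value $|C_n^\lambda(z)|$, with $z=\frac12(u+u^{-1})$ and $u=\rho e^{i\theta}$, is at least its value at $\theta=\frac{\pi}{2}$, i.e. at the minor-axis endpoint $z=\frac{i}{2}(\rho-\rho^{-1})$. Writing $C_n^\lambda(z)=k_n^\lambda\prod_{j=1}^n(z-x_j^\lambda)$ with zeros $x_j=\cos\phi_j\in(-1,1)$, a short computation gives $|z-x_j|^2=h(\theta,x_j)/(4\rho^2)$ with
$$h(\theta,x)=(\rho^2-1)^2+4\rho^2\cos^2\theta-4\rho(\rho^2+1)x\cos\theta+4\rho^2x^2.$$
Since the leading constant cancels in the difference, it suffices to prove
$$D(\theta,\lambda):=\sum_{j=1}^n\big[\log h(\theta,x_j^\lambda)-\log h(\tfrac{\pi}{2},x_j^\lambda)\big]\ge 0.$$
Because the zeros are symmetric about the origin by \eqref{eq:symmetry relation}, I would group them into pairs $\{x_j,-x_j\}$ (with one central zero $x=0$ left over for odd $n$), which renders both $D$ and $\partial_\lambda D$ transparent.

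My strategy is a continuation/monotonicity argument in $\lambda$ anchored at the two Chebyshev cases already settled. The base cases are: at $\lambda=1$, where $C_n^1=U_n$ and Theorem \ref{thm:ChebyUMin} together with Remark \ref{rem:rhon} (whence $\rho_2^*\ge\rho_n^*$, as $\{\rho_n^*\}$ is decreasing) give $D(\theta,1)\ge0$ for $\rho\ge\rho_2^*$ and every $n$; and the limit $\lambda\to0^+$, where $C_n^\lambda$ degenerates up to a positive constant to $T_n$, so that for odd $n$ Remark \ref{RK:ChebOdd} and Theorem \ref{thm:Cheby1kind} place a global minimum of $|T_n|$ at the minor-axis endpoints, yielding $\lim_{\lambda\to0^+}D(\theta,\lambda)\ge0$. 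The core is then to prove $\partial_\lambda D(\theta,\lambda)\ge0$ for all $\theta$ and admissible $\lambda$; afterwards $\lambda>1$ follows from $D(\theta,\lambda)\ge D(\theta,1)\ge0$, and $0<\lambda<1$ with $n$ odd from $D(\theta,\lambda)\ge\lim_{\mu\to0^+}D(\theta,\mu)\ge0$.

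To establish the monotonicity I would differentiate $D$ in $\lambda$ using \eqref{eq:decrease}: each positive zero obeys $\partial_\lambda x_j<0$, the negative zeros move oppositely by symmetry, and the central zero stays fixed (contributing $0$). Pairing $x_j$ with $-x_j$ and writing $h_\pm=h(\theta,\pm x_j)$, $M=(\rho^2-1)^2+4\rho^2x_j^2$, $N=2(\rho^4+1)\cos^2\theta$, a direct calculation collapses the pair's contribution to
$$16\rho^2 x_j\,\frac{\partial x_j}{\partial\lambda}\left[\frac{M-N}{h_+h_-}-\frac1M\right],$$
whose prefactor is negative; so I need the bracket $\le0$, i.e. $M(M-N)\le h_+h_-$. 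Expanding and discarding the manifestly nonnegative term $16\rho^4\cos^2\theta$ reduces this to the $\theta$-uniform bound $x_j^2\le\dfrac{(\rho^2-1)^2(\rho^4+4\rho^2+1)}{4\rho^2(\rho^4+1)}$.

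Finally I would check that this bound is at least $1$ exactly when $\rho\ge\rho_2^*$, so it holds for every $x_j\in(-1,1)$. With $t=\rho^2$, the required $(\rho^2-1)^2(\rho^4+4\rho^2+1)\ge4\rho^2(\rho^4+1)$ becomes $t^4-2t^3-6t^2-2t+1\ge0$; this polynomial is palindromic, and $y=t+t^{-1}$ factors it as $t^2(y-4)(y+2)$, nonnegative precisely when $y\ge4$, i.e. $t^2-4t+1\ge0$, i.e. $t=\rho^2\ge2+\sqrt3$, i.e. $\rho\ge\frac12(\sqrt2+\sqrt6)=\rho_2^*$, the root of \eqref{eq:rhonequ} with $n=2$. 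The main obstacle is exactly this coincidence: that the sign of $\partial_\lambda D$ is controlled by a single quadratic-in-$y$ condition whose threshold is $\rho_2^*$. Recognizing the palindromic structure and arranging the pairing so the $\theta$-dependence drops out cleanly is where the real work lies.
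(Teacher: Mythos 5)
Your proof is correct, and it checks out in detail: the formula $|z-x|^2=h(\theta,x)/(4\rho^2)$, the pair-derivative collapse to $16\rho^2x_j\,\partial_\lambda x_j\bigl[\tfrac{M-N}{h_+h_-}-\tfrac1M\bigr]$, the reduction of $M(M-N)\le h_+h_-$ to $x_j^2\le(\rho^2-1)^2(\rho^4+4\rho^2+1)/(4\rho^2(\rho^4+1))$, and the palindromic factorization $t^4-2t^3-6t^2-2t+1=t^2(y-4)(y+2)$, $y=t+t^{-1}$, which pins the threshold at $\rho^2\ge 2+\sqrt3$, i.e.\ $\rho\ge\rho_2^*$, all verify. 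Your route shares the paper's skeleton — anchoring at the Chebyshev cases via Theorems \ref{thm:Cheby1kind} and \ref{thm:ChebyUMin} together with the monotonicity of $\rho_n^*$ from Remark \ref{rem:rhon}, and feeding in the zero monotonicity \eqref{eq:decrease} — but the passage from Chebyshev to general $\lambda$ is genuinely different. The paper factors $1/|C_n^\lambda(z)|$ as a constant times $\prod_k\bigl|(z^2-y_k^2)/(z^2-(x_k^\lambda)^2)\bigr|$ times $1/|U_n(z)|$ (or $1/|T_n(z)|$ when $0<\lambda<1$ and $n$ is odd) and maximizes each factor separately at the minor-axis endpoints, citing Schira's result (Lemma \ref{lemma:MinR}, quoted from \cite{schira1997remainder}) for the rational factors. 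You instead deform $\lambda$ continuously and prove $\partial_\lambda D\ge0$ pointwise in $\theta$. Your bracket inequality is precisely the infinitesimal version of Schira's lemma: it says $x\mapsto|z(\theta)^2-x^2|/|z(\tfrac{\pi}{2})^2-x^2|$ is nonincreasing on $(0,1)$ when $\rho\ge\rho_2^*$, and integrating it in $x$ from $x_k^\lambda$ to $y_k$ recovers exactly the statement the paper cites. What your approach buys is self-containedness — the constant $\rho_2^*$ emerges from elementary algebra rather than being inherited from the hypothesis of an external lemma — at the cost of a longer computation and of needing smoothness of the zeros in $\lambda$ and their convergence to the zeros of $T_n$ as $\lambda\to0^+$ (both standard, and implicit in \eqref{eq:decrease}, but worth stating explicitly if you write this up). The paper's discrete product comparison avoids those continuity considerations entirely.
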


We precede the proof of Theorem \ref{thm:Gegenbauer} with the following lemma.
\begin{lemma}\label{lemma:MinR}
Let $z\in \mathcal{E}_{\rho}$ and
define
\[
\mathcal{R}(z) = \frac{z^2-s^2}{z^2-t^2},
\]
where $s,t\in(0,1)$. Then, for $s>t$ and $\rho \geq \rho_2^{*}$,
\begin{align}
\max_{z\in \mathcal{E}_{\rho}} \left| \mathcal{R}(z) \right| =
\left| \mathcal{R}\left( \pm \tfrac{i}{2}(\rho-\rho^{-1}) \right)
\right|.
\end{align}
\end{lemma}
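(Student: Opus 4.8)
The plan is to exploit the fact that $\mathcal{R}(z)$ depends on $z$ only through $w := z^2$, and to track the image of the Bernstein ellipse under the squaring map. Writing $z = \tfrac12(u+u^{-1})$ with $u = \rho e^{i\theta}$, a direct computation gives $w = z^2 = \tfrac12 + A\cos(2\theta) + iB\sin(2\theta)$, where $A = \tfrac14(\rho^2+\rho^{-2})$ and $B = \tfrac14(\rho^2-\rho^{-2})$. Thus $w$ traverses an ellipse centred at $\tfrac12$ with horizontal semi-axis $A$ and vertical semi-axis $B$, and crucially $A^2 - B^2 = \tfrac14$. The two endpoints of the minor axis $z = \pm\tfrac{i}{2}(\rho-\rho^{-1})$ correspond to $\theta = \tfrac{\pi}{2}, \tfrac{3\pi}{2}$, i.e.\ to the leftmost point $w = \tfrac12 - A$ of this $w$-ellipse. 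Setting $c := \cos(2\theta) \in [-1,1]$ and using $A^2 - B^2 = \tfrac14$, I would reduce the problem to maximising
\[
f(c) := |\mathcal{R}(z)|^2 = \frac{N(c)}{D(c)}, \qquad N(c) = \tfrac14 c^2 + 2pAc + (p^2 + B^2), \quad D(c) = \tfrac14 c^2 + 2qAc + (q^2 + B^2),
\]
over $c \in [-1,1]$, where $p := \tfrac12 - s^2$ and $q := \tfrac12 - t^2$; the hypotheses $s,t \in (0,1)$ and $s > t$ translate into $p, q \in (-\tfrac12, \tfrac12)$ and $p < q$.

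The key algebraic observation is that $N$ and $D$ share the leading coefficient $\tfrac14$, so that $N(c) - D(c) = (p-q)(2Ac + p + q)$ and hence
\[
f(c) = 1 + (p-q)\,g(c), \qquad g(c) = \frac{2Ac + p + q}{D(c)}.
\]
Since $p - q < 0$, maximising $f$ is equivalent to minimising $g$. Differentiating, the numerator of $g'(c)$ is the concave quadratic
\[
h(c) = -\tfrac{A}{2}c^2 - \tfrac{p+q}{2}c + 2A(B^2 - pq),
\]
and $g'(c)$ has the same sign as $h(c)$ on $[-1,1]$ (since $D > 0$ there, because $t^2$ lies strictly inside the $w$-ellipse, whose real-axis extent is $[\tfrac12 - A, \tfrac12 + A] \supset [0,1]$). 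Because $h$ is concave, it attains its minimum over $[-1,1]$ at an endpoint, so it suffices to check $h(\pm 1) > 0$. The plan is to show that the hypothesis $\rho \geq \rho_2^*$ is exactly the statement $A \geq 1$: indeed $\rho_2^*$ is characterised by $\rho + \rho^{-1} = \sqrt 6$, whence $\rho^2 + \rho^{-2} = 4$ and $A = 1$, and $A$ is increasing in $\rho$. With $A \geq 1$ one has $B^2 = A^2 - \tfrac14 \geq \tfrac34$, and since $|p|,|q| < \tfrac12$ force $pq < \tfrac14$, the coefficient $2B^2 - \tfrac12 - 2pq$ exceeds $\tfrac12$; combining this with $|p+q| < 1$ yields $h(\pm 1) > 0$.

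Consequently $h > 0$, hence $g' > 0$, on all of $[-1,1]$, so $g$ is strictly increasing and $f = 1 + (p-q)g$ is strictly decreasing. Therefore $f$ attains its maximum at $c = -1$, which is precisely $w = \tfrac12 - A$, i.e.\ $z = \pm\tfrac{i}{2}(\rho-\rho^{-1})$, giving the claim. The main obstacle is the bookkeeping described above: carrying out the reduction to the single variable $c$ cleanly (the cancellation $A^2 - B^2 = \tfrac14$ is what makes the leading coefficients of $N$ and $D$ coincide and produces the tidy factorisation of $N - D$), and verifying that the threshold $\rho \geq \rho_2^*$ is sharp enough to force $h(\pm1) > 0$. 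The latter is where the specific value $\rho_2^*$ enters, and one must be careful that the bound $2B^2 - \tfrac12 - 2pq > \tfrac12$ relies on both $A \geq 1$ and the constraint $p,q \in (-\tfrac12,\tfrac12)$ coming from $s,t \in (0,1)$.
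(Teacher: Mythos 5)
Your proof is correct, but it takes a genuinely different route from the paper: the paper gives no argument of its own for this lemma, its entire proof being the single line ``See \cite[Lemma~4.1]{schira1997remainder}.'' Your write-up supplies a complete, elementary derivation, and the key steps all check out. Indeed, with $z=\tfrac12(u+u^{-1})$, $u=\rho e^{i\theta}$, one has $z^2=\tfrac12+A\cos 2\theta+iB\sin 2\theta$ with $A=\tfrac14(\rho^2+\rho^{-2})$, $B=\tfrac14(\rho^2-\rho^{-2})$, so that $A^2-B^2=\tfrac14$ and $\left|\mathcal{R}(z)\right|^2$ is a function of $c=\cos 2\theta$ alone; the equal leading coefficients of $N$ and $D$ give $f=1+(p-q)g$ with $g=(2Ac+p+q)/D$; the numerator of $g'$ is the concave quadratic $h(c)=-\tfrac{A}{2}c^2-\tfrac{p+q}{2}c+2A(B^2-pq)$; and your identification of the threshold is exact, since $a_3(\rho)=3a_1(\rho)$ is equivalent to $(\rho+\rho^{-1})^2=6$, i.e.\ $\rho^2+\rho^{-2}=4$, i.e.\ $A=1$, with $A$ strictly increasing in $\rho>1$. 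Then $A\geq 1$ forces $B^2\geq\tfrac34$, while $|p|,|q|<\tfrac12$ force $pq<\tfrac14$ and $|p+q|<1$, whence
\begin{equation*}
h(\pm 1)=A\left(2B^2-\tfrac12-2pq\right)\mp\tfrac{p+q}{2}>\tfrac12-\tfrac12=0,
\end{equation*}
so $g$ is strictly increasing, $f$ is strictly decreasing (as $p<q$), and the maximum sits at $c=-1$, i.e.\ at $z=\pm\tfrac{i}{2}(\rho-\rho^{-1})$, as claimed. Comparing the two routes: the paper's citation buys brevity and leans on Schira's lemma, which was formulated precisely for this kind of application (remainder estimates for symmetric Gaussian quadratures); your argument buys self-containedness, makes completely transparent where the hypothesis $\rho\geq\rho_2^*$ enters (it is exactly $A\geq 1$), shows that the maximizer is unique in the variable $c$, and in principle leaves room to analyze what happens below the threshold or when $s<t$ (where the same computation localizes the maximum at $c=+1$, the endpoints of the major axis).
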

\begin{proof}
See \cite[Lemma~4.1]{schira1997remainder}.
\end{proof}

\paragraph{Proof of Theorem \ref{thm:Gegenbauer}}
Let $\{x_j^\lambda\}_{j=1}^n$ be the zeros of $C_n^{\lambda}(x)$ arranged in decreasing order. The symmetry relation \eqref{eq:symmetry relation} implies that
\begin{equation}\label{eq:Gegenxk}
C_n^{\lambda}(z) = k_n^{\lambda} \prod_{k=1}^{n} (z - x_k^{\lambda})
= k_n^{\lambda} z^{n - 2\floor{n/2}} \prod_{k=1}^{\floor{n/2}} (z^2
- (x_k^{\lambda})^2),
\end{equation}
where $k_n^{\lambda}$ is the leading coefficient of $C_n^{\lambda}(x)$. Moreover, we see
that $x_k^{\lambda} > 0$ for $k=1,\ldots,\floor{n/2}$.

Let $0< y_{\floor{n/2}} < \cdots<y_1<1$ be the positive zeros of the Chebyshev polynomials of the second kind $U_n(z)$. Again, we could rewrite $U_n(z)$ as
\begin{equation}\label{eq:Uny}
U_n(z) = 2^n z^{n - 2\floor{n/2}} \prod_{k=1}^{\floor{n/2}} (z^2 -
y_k^2).
\end{equation}
By \eqref{eq:decrease}, it follows that
\begin{equation}\label{eq:inequalityxy}
0<x_k^{\lambda}<y_k<1,
\end{equation}
for $k=1,\ldots,\floor{n/2}$.

To find $\min_{z\in \mathcal{E}_{\rho}} \left| C_n^\lambda(z)\right|$ is equivalent to find $\max_{z\in \mathcal{E}_{\rho}} \left|\frac{1}{ C_n^\lambda(z)}\right|$. A combination of \eqref{eq:Gegenxk} and \eqref{eq:Uny} gives
\begin{align}\label{eq:1Cn}
\left| \frac{1}{ C_n^{\lambda}(z)} \right| &=
\frac{2^n}{k_n^{\lambda}} \prod_{k=1}^{\floor{n/2}} \left| \frac{z^2
- y_k^2}{z^2 - (x_k^{\lambda})^2} \right| \times \left|
\frac{1}{U_n(z)} \right|.
\end{align}
In view of \eqref{eq:inequalityxy}, Lemma \ref{lemma:MinR}, Theorem \ref{thm:ChebyUMin} and the monotonicity of $\rho_n^*$ aforementioned in Remark \ref{rem:rhon},
we conclude that all the terms on the right hand side of \eqref{eq:1Cn} attain their
maximum values at $z=\pm\frac{i}{2}(\rho-\rho^{-1})$ for $\rho\geq \rho_2^*$. 
Therefore, $|C_n^{\lambda}(z)|$ attains its minimum at two endpoints of the minor axis provided $\lambda>1$.

The case for $0<\lambda<1$ and odd $n$ can be proved in a similar
manner. We only need to replace $U_n(z)$ in \eqref{eq:1Cn} by the
Chebyshev polynomials of the first kind $T_n(z)$, and to make use of
Remark \ref{RK:ChebOdd} instead. The details are left to the
interested readers.

This completes the proof of Theorem \ref{thm:Gegenbauer}.\qed

\section{Asymptotic estimate of Jacobi polynomials on the Bernstein ellipse}
\label{sec:JacobiBound}

From the explicit formula \eqref{eq:XWZexpli} of Gegenbauer
polynomials on the Bernstein ellipse, the authors in
\cite{xie2013exponential} derive an asymptotic estimate of
$C_n^{\lambda}(z)$ as shown in \eqref{eq:estXWZ}. Due to the
complexity of the coefficients $d_{k,n}$ given in \eqref{eq:dkn}, it
is difficult to apply the same approach to obtain the asymptotic
estimate of Jacobi polynomials on the Bernstein ellipse.

To this end, we note that a more computable form has been given in
\cite{kuijlaars2004riemann}, where the authors actually consider
asymptotics of polynomials orthogonal with respect to a modified
Jacobi weight function
\begin{equation}\label{eq:modJac}
w(x) = (1-x)^{\alpha} (1+x)^{\beta} h(x),
\end{equation}
with $\alpha,\beta>-1$ and $h(x)$ being real analytic and strictly
positive on $[-1,1]$. Based on the Riemann-Hilbert (RH) approach \cite{Deift99book}, various asymptotics of the monic/orthonomal polynomials in the complex plane have been derived in \cite{kuijlaars2004riemann}, which in particular includes a full asymptotic expansion for the monic polynomials outside $[-1,1]$.

To state the relevant results, we need the function
\begin{equation}
\varphi(z)=z+\sqrt{z^2-1}, \qquad z \in \mathbb{C}\setminus [-1,1],
\end{equation}
where $\sqrt{z^2-1}$ is analytic in $\mathbb{C}\setminus [-1,1]$ and behaves like $z$ as $z\to \infty$. This function is a conforming mapping from $\mathbb{C}\setminus [-1,1]$ onto the exterior of the unit circle. Thus,
$$|\varphi(z)|>1, \qquad z \in \mathbb{C}\setminus [-1,1].$$
As in \cite{kuijlaars2004riemann}, we also define the Szeg\H{o} function of a weight $w$ by
\begin{equation}\label{def:D}
D(z)=D(z;w)=\exp\left(\frac{(z^2-1)^{1/2}}{2\pi}\int_{-1}^1\frac{\log w(x)}{\sqrt{1-x^2}}\frac{dx}{z-x}\right),\qquad z \in \mathbb{C}\setminus [-1,1],
\end{equation}
and
\begin{equation}\label{eq:Dinfty}
D_\infty=\lim_{z\to\infty}D(z)=\exp\left(\frac{1}{2\pi} \int_{-1}^1\frac{w(x)}{\sqrt{1-x^2}}dx\right).
\end{equation}
The function $D(z)$ is analytic for $z \in \mathbb{C}\setminus [-1,1]$.

Let $\pi_n(x)$ denote the monic orthogonal polynomial of degree $n$ associated with \eqref{eq:modJac}.
It is shown in \cite[Theorem~1.4]{kuijlaars2004riemann} that
$\pi_n(z)$ has an asymptotic expansion of the
form
\begin{align}\label{eq:monic asy}
\pi_n(z) \sim \frac{D_{\infty}}{D(z) }
\frac{\varphi(z)^{n+\frac{1}{2}}}{2^{n+\frac{1}{2}}
(z^2-1)^{\frac{1}{4}}}  \left[1 + \sum_{k=1}^{\infty}
\frac{\Pi_k(z)}{n^k} \right], \quad n\rightarrow\infty,
\end{align}
uniformly valid for $z$ in any compact subsets of
$\mathbb{C}\setminus [-1,1]$. The functions $\Pi_k(z)$, which are
analytic on $z\in \mathbb{C} \setminus [-1,1]$, are rational in
$\varphi$. They are explicitly computable via the RH approach but with more complicated form
as $k$ increases. The first two terms are
\begin{align}\label{def:Pi1}
\Pi_1(z) = - \frac{4\alpha^2-1}{8(\varphi(z)-1)} +
\frac{4\beta^2-1}{8(\varphi(z)+1)},
\end{align}
and
\begin{align}\label{def:Pi2}
\Pi_2(z) &= \frac{(4\alpha^2-1)(\alpha+\beta)}{16(\varphi(z)-1)} -
\frac{(4\beta^2-1)(\alpha+\beta)}{16(\varphi(z)+1)} -
\frac{(4\alpha^2-1)(4\beta^2-1)}{128(z^2-1)}\nonumber \\
&~~~ + \frac{2\alpha^2+2\beta^2-5}{64} \left[
\frac{4\alpha^2-1}{(\varphi(z)-1)^2} +
\frac{4\beta^2-1}{8(\varphi(z)+1)^2} \right].
\end{align}
For an efficient numerical calculations of the higher-order terms $\Pi_k(z)$, we refer to recent work \cite{DHO15}.

Obviously, the classical Jacobi polynomials correspond to the case $h(x) = 1$ in \eqref{eq:modJac}. We then have the following asymptotic estimate of Jacobi polynomials on the Bernstein ellipse in the variable of parametrization.
\begin{proposition}\label{prop:asyellip}
For $z\in \mathcal{E}_{\rho}$, i.e.,
\begin{equation}\label{eq:paraI}
z=\frac{1}{2} \left( u + u^{-1} \right),~~ u = \rho e^{i \theta},~~
\rho >1,~  ~0 \leq \theta  \leq   2\pi,
\end{equation}
we have, for large $n$,
\begin{align}\label{ineq:AsympEstim}
\left| \left( 1 - u^{-1} \right)^{-\alpha-\frac{1}{2}} \left( 1 +
u^{-1} \right)^{-\beta-\frac{1}{2}} - \frac{\sqrt{\pi n}
}{2^{\alpha+\beta} u^n} P_{n}^{(\alpha,\beta)}(z) \right| \leq
\Lambda(\rho,\alpha,\beta) n^{-1} + \mathcal{O}(n^{-2}).
\end{align}
where
\begin{align}\label{def:Pi1}
\Lambda(\rho,\alpha,\beta) = \max_{|u|=\rho} \left| \frac{4
\hat{\Pi}_1(u)-(\alpha+\beta)^2-(\alpha+\beta)-\frac{1}{2}}{4(1-u^{-1})^{\alpha+\frac{1}{2}}(1+u^{-1})^{\beta+\frac{1}{2}}}
\right|.
\end{align}
and
\begin{align}\label{def:Pi1}
\hat{\Pi}_1(u) = \frac{4\beta^2-1}{8(u+1)} -
\frac{4\alpha^2-1}{8(u-1)} .
\end{align}
Furthermore, the error is uniformly bounded for $z\in\mathcal{E}_{\rho}$ with $\rho>1$.
\end{proposition}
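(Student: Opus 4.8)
The plan is to reduce the statement to the Riemann--Hilbert asymptotic expansion \eqref{eq:monic asy} for the monic orthogonal polynomial $\pi_n$ of the \emph{pure} Jacobi weight, i.e.\ the case $h\equiv1$ of \eqref{eq:modJac}. Since $P_n^{(\alpha,\beta)}(z)=k_n^{(\alpha,\beta)}\pi_n(z)$ with $k_n^{(\alpha,\beta)}$ given by \eqref{eq:kn}, the whole estimate follows by substituting \eqref{eq:monic asy} together with the two-term asymptotics of $k_n^{(\alpha,\beta)}$ into $\tfrac{\sqrt{\pi n}}{2^{\alpha+\beta}u^n}P_n^{(\alpha,\beta)}(z)$ and expanding to order $n^{-1}$. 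The crucial preliminary observation is that every building block of \eqref{eq:monic asy} becomes elementary on $\mathcal{E}_{\rho}$.

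First I would evaluate those building blocks in the parametrization $z=\tfrac12(u+u^{-1})$, $|u|=\rho>1$. Because $z^2-1=\tfrac14(u-u^{-1})^2$ and $\varphi$ maps the exterior of $[-1,1]$ onto $|u|>1$, one has $\varphi(z)=u$ and $(z^2-1)^{1/4}=\tfrac{1}{\sqrt2}(u-u^{-1})^{1/2}$ with the consistent branch. For the Szeg\H{o} function of the Jacobi weight I would use the explicit formula (derivable from \eqref{def:D}) $D(z)=2^{-(\alpha+\beta)/2}(z-1)^{\alpha/2}(z+1)^{\beta/2}\varphi(z)^{-(\alpha+\beta)/2}$, whence $D_\infty=2^{-(\alpha+\beta)}$; combined with $z\mp1=\tfrac1{2u}(u\mp1)^2$, a direct computation gives $D_\infty/D(z)=(1-u^{-1})^{-\alpha}(1+u^{-1})^{-\beta}$ on $\mathcal{E}_{\rho}$. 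Substituting into the prefactor of \eqref{eq:monic asy} and cancelling the powers of $u$ yields
\[
\pi_n(z)\sim \frac{u^n}{2^n}\,(1-u^{-1})^{-\alpha-\frac12}(1+u^{-1})^{-\beta-\frac12},
\]
which already identifies the leading term once $k_n^{(\alpha,\beta)}$ is handled.

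Next I would expand $k_n^{(\alpha,\beta)}$ to order $n^{-1}$. Applying the duplication formula \eqref{eq:dupli} to $\Gamma(2n+\alpha+\beta+1)$ converts \eqref{eq:kn} into a product of Gamma quotients of the form $\Gamma(n+a)/\Gamma(n+b)$, and the standard expansion $\Gamma(n+a)/\Gamma(n+b)=n^{a-b}\bigl[1+\tfrac{(a-b)(a+b-1)}{2n}+\mathcal{O}(n^{-2})\bigr]$ gives $k_n^{(\alpha,\beta)}=\tfrac{2^{n+\alpha+\beta}}{\sqrt{\pi n}}\bigl[1+\tfrac{c_1}{n}+\mathcal{O}(n^{-2})\bigr]$ with $c_1=-\tfrac{2(\alpha+\beta)^2+2(\alpha+\beta)+1}{8}$. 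The normalization $\tfrac{\sqrt{\pi n}}{2^{\alpha+\beta}}$ is precisely the one making all leading factors cancel, so the $\mathcal{O}(n^{-1})$ coefficient of $\tfrac{\sqrt{\pi n}}{2^{\alpha+\beta}u^n}P_n^{(\alpha,\beta)}(z)$ is $(1-u^{-1})^{-\alpha-\frac12}(1+u^{-1})^{-\beta-\frac12}\bigl(c_1+\Pi_1(z)\bigr)$. Since $\varphi(z)=u$ forces $\Pi_1(z)=\hat\Pi_1(u)$, the elementary identity
\[
4\bigl(c_1+\hat\Pi_1(u)\bigr)=4\hat\Pi_1(u)-(\alpha+\beta)^2-(\alpha+\beta)-\tfrac12
\]
shows this coefficient equals the integrand defining $\Lambda(\rho,\alpha,\beta)$, and taking $\max_{|u|=\rho}$ of its modulus produces the asserted bound.

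The one point that genuinely requires care is that the $n^{-1}$ term receives contributions from \emph{both} the Riemann--Hilbert correction $\Pi_1$ and the subleading coefficient $c_1$ of $k_n^{(\alpha,\beta)}$: keeping only $\Pi_1$ would give the wrong constant, and it is exactly their sum that yields the numerator of $\Lambda$. The remaining issues are routine: the bookkeeping of branches and powers of $u$, and verifying that the product of the uniform $\mathcal{O}(n^{-2})$ tail of \eqref{eq:monic asy} with the $\mathcal{O}(n^{-1/2})$ factor $k_n^{(\alpha,\beta)}$ and the $\mathcal{O}(\sqrt n)$ normalization remains $\mathcal{O}(n^{-2})$. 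Finally, the uniformity claim is immediate because for each fixed $\rho>1$ the curve $\mathcal{E}_{\rho}$ is a compact subset of $\mathbb{C}\setminus[-1,1]$, on which \eqref{eq:monic asy} holds uniformly, and the integrand of $\Lambda(\rho,\alpha,\beta)$ is continuous on the compact circle $|u|=\rho$, so the maximum is finite and attained.
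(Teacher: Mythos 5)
Your proposal is correct and follows essentially the same route as the paper's proof: specialize the Riemann--Hilbert expansion \eqref{eq:monic asy} to the pure Jacobi weight via the explicit Szeg\H{o} function, expand $k_n^{(\alpha,\beta)}$ through Gamma-function asymptotics, parametrize by $\varphi(z)=u$ on $\mathcal{E}_{\rho}$, and identify the $n^{-1}$ coefficient as the sum $c_1+\hat{\Pi}_1(u)$, exactly as in \eqref{eq:asyElipse}. The only blemish is your normalization of the Szeg\H{o} function (you write $D_\infty=2^{-(\alpha+\beta)}$, whereas \eqref{def:D} gives $D_\infty=2^{-(\alpha+\beta)/2}$ with $D(z)=(z-1)^{\alpha/2}(z+1)^{\beta/2}\varphi(z)^{-(\alpha+\beta)/2}$), but this is harmless since only the ratio $D_\infty/D(z)$ enters \eqref{eq:monic asy} and your spurious constant cancels, so your value $(1-u^{-1})^{-\alpha}(1+u^{-1})^{-\beta}$ for that ratio is correct.
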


\begin{proof}
We first derive the uniform asymptotics of $P_{n}^{(\alpha,\beta)}(z)$ for $z\in \mathbb{C}\setminus [-1,1]$.
In view of the facts that
\begin{equation*}\label{eq:Szego}
D(z;(1-x)^\alpha(1+x)^{\beta})=\frac{(z-1)^{\alpha/2}(z+1)^{\beta/2}}{\varphi(z)^{(\alpha+\beta)/2}}
\end{equation*}
and
\begin{equation*}
D_\infty=\lim_{z\to\infty}D(z;(1-x)^\alpha(1+x)^{\beta})=2^{-(\alpha+\beta)/2},
\end{equation*}
it then follows from \eqref{eq:JacPol} and \eqref{eq:monic asy} that
\begin{align}\label{eq:asyJac1}
\frac{P_{n}^{(\alpha,\beta)}(z)}{k_n^{(\alpha,\beta)}} \sim \frac{
\varphi(z)^{n+\frac{\alpha+\beta+1}{2}}}{2^{n+\frac{\alpha+\beta+1}{2}}
(z-1)^{\frac{\alpha}{2}+\frac{1}{4}}
(z+1)^{\frac{\beta}{2}+\frac{1}{4}}  }  \left[1 +
\sum_{k=1}^{\infty} \frac{\Pi_k(z)}{n^k} \right], \quad n\rightarrow\infty,
\end{align}
uniformly valid for $z$ in any compact subsets of
$\mathbb{C}\setminus [-1,1]$, where $k_n^{(\alpha,\beta)}$ is
defined as in \eqref{eq:kn}.
Using asymptotic formulas for the Gamma functions (see
\cite[Formulas 5.11.3 and 5.11.13]{DLMF}), we deduce that
\begin{equation*}\label{eq:Kn2}
 k_n^{(\alpha,\beta)} = \frac{2^{n+\alpha+\beta}}{\sqrt{\pi n}}\left[1-\frac{(\alpha+\beta)^2+(\alpha+\beta)+\frac{1}{2}}{4n}+\mathcal{O}\left(\frac{1}{n^2}\right)\right].
\end{equation*}
This, together with \eqref{eq:asyJac1}, implies that
\begin{multline}\label{eq:asyJac2}
P_{n}^{(\alpha,\beta)}(z)=\frac{2^{\frac{\alpha+\beta}{2}}
\varphi(z)^{n+\frac{\alpha+\beta+1}{2}}}{\sqrt{2 \pi n}
(z-1)^{\frac{\alpha}{2}+\frac{1}{4}}
(z+1)^{\frac{\beta}{2}+\frac{1}{4}}  } \\
 \times \left[1 +\frac{4\Pi_1(z)-(\alpha+\beta)^2-(\alpha+\beta)-\frac{1}{2}}{4n} +\mathcal{O}\left(\frac{1}{n^2}\right) \right], \quad n\rightarrow\infty,
\end{multline}
where $\Pi_1(z)$ is given in \eqref{def:Pi1}.

If $z\in \mathcal{E}_{\rho} \subset \mathbb{C}\setminus [-1,1]$, which can be parameterized through the argument $u$ as shown in \eqref{eq:para}, it is straightforward to check that
\begin{align*}
\varphi(z)&=z+\sqrt{z^2-1}=u, \\
(z-1)^{\frac{\alpha}{2}+\frac{1}{4}}
(z+1)^{\frac{\beta}{2}+\frac{1}{4}}&=\frac{(u-1)^{\alpha+\frac{1}{2}}(u+1)^{\beta+\frac{1}{2}}}{(2u)^{\frac{\alpha+\beta+1}{2}}}.
\end{align*}
A combination of the above two formulas and \eqref{eq:asyJac2} gives
\begin{multline}\label{eq:asyElipse}
P_{n}^{(\alpha,\beta)}(z) = \frac{ 2^{\alpha+\beta}u^{n}}{\sqrt{\pi n}} (1-u^{-1})^{-\alpha-\frac{1}{2}}(1+u^{-1})^{-\beta-\frac{1}{2}} \\
\times \left[1 + \frac{4
\hat{\Pi}_1(u)-(\alpha+\beta)^2-(\alpha+\beta)-\frac{1}{2}}{4n}
+\mathcal{O}\left(\frac{1}{n^2}\right)  \right],
\end{multline}
where $\hat{\Pi}_1(u)$ is defined as in \eqref{def:Pi1} and the
asymptotics is valid uniformly for $z\in \mathcal{E}_{\rho}$ with
$\rho>1$. By using the above uniform asymptotic, it is
straightforward to derive the desired result \eqref{ineq:AsympEstim}
and this completes the proof of Proposition \ref{prop:asyellip}.
\end{proof}

\begin{remark}
One should compare the asymptotic estimate \eqref{ineq:AsympEstim} with \eqref{eq:estXWZ}. It is worthwhile to point out that the error in \eqref{ineq:AsympEstim} is of order $\mathcal{O}(1/n)$. Indeed, a full asymptotic expansion of $P_{n}^{(\alpha,\beta)}(z)$ in terms of powers of $1/n$ on the Bernstein ellipse $\mathcal{E}_{\rho}$ can be derived by combining \eqref{eq:asyJac1} and a full asymptotic expansion of the leading coefficient $k_n^{(\alpha,\beta)}$. We note that this form of asymptotic expansion has been mentioned in \cite[Theorem 8.21.9]{szego1939orthogonal}, but without explicit formulas for the coefficients.
\end{remark}

\begin{remark}
As a direct consequence of Proposition \ref{prop:asyellip}, we have
\begin{align}
\lim_{n\rightarrow\infty} \frac{P_{n}^{(\alpha,\beta) }(z)
\sqrt{n\pi}}{2^{\alpha+\beta} u^n} = \left(1 - u^{-1}
\right)^{-\alpha-\frac{1}{2}} \left(1 + u^{-1}
\right)^{-\beta-\frac{1}{2}},
\end{align}
where $z = \frac{1}{2}(u + u^{-1})$ and $|u|=\rho>1$.
\end{remark}

A further application of Proposition \ref{prop:asyellip} is that we are able to derive the following lower bound for the Jacobi polynomial on the Bernstein ellipse, which particularly implies a more explicit expression of the constant $C(\rho;\alpha,\beta)$ appearing in \eqref{eq:WZZ}.

\begin{corollary}
For $z=\frac{1}{2}(u+u^{-1})\in\mathcal{E}_{\rho}$, we have
\begin{align}\label{eq:lowerbound}
\min_{z\in\mathcal{E}_{\rho} } |P_{n}^{(\alpha,\beta)}(z)| \geq
 \frac{C_n(\alpha,\beta) 2^{\alpha+\beta} \pi^{-\frac{1}{2}}
\rho^{n}}{\displaystyle \max_{|u| = \rho} \left|
(1-u^{-1})^{\alpha+\frac{1}{2}} (1+u^{-1})^{\beta+\frac{1}{2}}
\right|\sqrt{n}},
\end{align}
where $C_n(\alpha,\beta)$ is a positive constant and $C_n(\alpha,\beta) \sim 1$ for large $n$. Moreover,
\begin{equation}\label{eq:maxovercircle}
\displaystyle \max_{|u| = \rho} \left|
(1-u^{-1})^{\alpha+\frac{1}{2}} (1+u^{-1})^{\beta+\frac{1}{2}}
\right| =\left\{
   \begin{array}{ll}
     (1+\rho^{-2})^{\alpha+\frac{1}{2}}, & \mbox{if $\alpha=\beta\geq -\frac{1}{2}$,}
     \\[8pt]
    (1-\rho^{-2})^{\alpha+\frac{1}{2}}, & \mbox{if $-1< \alpha=\beta< -\frac{1}{2}$.}
   \end{array}
 \right.
\end{equation}
\end{corollary}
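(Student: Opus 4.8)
The plan is to read off both assertions directly from the uniform asymptotic expansion \eqref{eq:asyElipse} furnished by Proposition \ref{prop:asyellip}. First I would record the structural fact that, for $\rho>1$, the parametrization $u\mapsto z=\tfrac12(u+u^{-1})$ restricts to a bijection from the circle $|u|=\rho$ onto $\mathcal{E}_\rho$: since $z(u)=z(u^{-1})$ and $|u^{-1}|=\rho^{-1}\neq\rho$, exactly one of $u,u^{-1}$ lies on the circle, so the map is injective there and surjective onto $\mathcal{E}_\rho$ by definition. Consequently, minimizing $|P_n^{(\alpha,\beta)}(z)|$ over $z\in\mathcal{E}_\rho$ is the same as minimizing over $|u|=\rho$, which is precisely where \eqref{eq:asyElipse} holds uniformly.

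Writing $g(u)=(1-u^{-1})^{-\alpha-\frac12}(1+u^{-1})^{-\beta-\frac12}$, the expansion \eqref{eq:asyElipse} reads $P_n^{(\alpha,\beta)}(z)=\frac{2^{\alpha+\beta}u^n}{\sqrt{\pi n}}\,g(u)\,b_n(u)$, where $b_n(u)=1+\frac{4\hat\Pi_1(u)-(\alpha+\beta)^2-(\alpha+\beta)-\frac12}{4n}+\mathcal{O}(n^{-2})$. Taking moduli and using $|u^n|=\rho^n$ gives $|P_n^{(\alpha,\beta)}(z)|=\frac{2^{\alpha+\beta}\rho^n}{\sqrt{\pi n}}\,|g(u)|\,|b_n(u)|$. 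The one delicate point is that the correction $b_n(u)$ depends on the base point $u$, so I must extract a lower bound that is uniform over the circle. Since $\hat\Pi_1(u)$ is a rational function whose only poles are at $u=\pm1$, which do not lie on $|u|=\rho>1$, it is bounded on the circle, and the $\mathcal{O}(n^{-2})$ term in \eqref{eq:asyElipse} is uniform; hence $C_n(\alpha,\beta):=\min_{|u|=\rho}|b_n(u)|$ is positive for all large $n$ and tends to $1$. Using $|g(u)|\ge0$ together with the pointwise bound $|b_n(u)|\ge C_n(\alpha,\beta)$, I obtain
\[
\min_{|u|=\rho}\bigl(|g(u)|\,|b_n(u)|\bigr)\ge C_n(\alpha,\beta)\min_{|u|=\rho}|g(u)|,
\]
and since $\min_{|u|=\rho}|g(u)|=\bigl(\max_{|u|=\rho}|(1-u^{-1})^{\alpha+\frac12}(1+u^{-1})^{\beta+\frac12}|\bigr)^{-1}$, the lower bound \eqref{eq:lowerbound} follows immediately.

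It then remains to evaluate the maximum in \eqref{eq:maxovercircle} in the Gegenbauer case $\alpha=\beta$. Here the product collapses, since $|(1-u^{-1})^{\alpha+\frac12}(1+u^{-1})^{\alpha+\frac12}|=|1-u^{-2}|^{\alpha+\frac12}$. Writing $u=\rho e^{i\theta}$ gives $|1-u^{-2}|^2=1+\rho^{-4}-2\rho^{-2}\cos(2\theta)$, which ranges between $(1-\rho^{-2})^2$ (attained at $\cos2\theta=1$) and $(1+\rho^{-2})^2$ (attained at $\cos2\theta=-1$). As $x\mapsto x^{\alpha+\frac12}$ is increasing when $\alpha>-\frac12$ and decreasing when $\alpha<-\frac12$ (and constant when $\alpha=-\frac12$, where both expressions equal $1$), the maximum of $|1-u^{-2}|^{\alpha+\frac12}$ is attained at $\cos2\theta=-1$ in the former case and at $\cos2\theta=1$ in the latter, yielding $(1+\rho^{-2})^{\alpha+\frac12}$ and $(1-\rho^{-2})^{\alpha+\frac12}$ respectively, which is exactly \eqref{eq:maxovercircle}.

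The main obstacle is the second step: converting the pointwise asymptotic \eqref{eq:asyElipse}, whose $\mathcal{O}(1/n)$ correction genuinely varies with $u$, into a clean lower bound with a single multiplicative constant $C_n(\alpha,\beta)\sim1$. Everything there hinges on the uniformity of the expansion over $|u|=\rho$ (which Proposition \ref{prop:asyellip} supplies) and on the elementary but essential observation that the minimum of a product $|g(u)|\,|b_n(u)|$ of nonnegative factors is bounded below by $\min|g|$ times a uniform lower bound for $|b_n|$. By contrast, the bijection reduction of the first step and the trigonometric optimization of the last step are entirely routine.
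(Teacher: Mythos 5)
Your proposal is correct and follows essentially the same route as the paper: both parts are read off from Proposition \ref{prop:asyellip}, with the maximum in \eqref{eq:maxovercircle} evaluated via the same computation $|1-u^{-2}|=\sqrt{1-2\rho^{-2}\cos(2\theta)+\rho^{-4}}$ and the monotonicity of $x\mapsto x^{\alpha+\frac{1}{2}}$. The only (cosmetic) difference is that you extract the lower bound from the multiplicative form of the expansion \eqref{eq:asyElipse}, bounding the correction factor $b_n(u)$ below uniformly on $|u|=\rho$, whereas the paper applies the reverse triangle inequality $\bigl||z_1|-|z_2|\bigr|\leq|z_1-z_2|$ to the additive estimate \eqref{ineq:AsympEstim}; these two steps are equivalent consequences of the same uniform asymptotics.
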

\begin{proof}
The lower bound follows immediately from Proposition
\ref{prop:asyellip} and the elementary inequality $||z_1| -
|z_2||\leq |z_1-z_2|$.
To show \eqref{eq:maxovercircle}, by setting
$u=\rho e^{i\theta}$, $\rho>1$ and $0\leq \theta<2\pi$, it is easily
seen that
\begin{equation}
1-\rho^{-2} \leq
|1-u^{-2}|=\sqrt{1-2\rho^{-2}\cos(2\theta)+\rho^{-4}} \leq
1+\rho^{-2}.
\end{equation}
Hence, if $|u|=\rho$, $|1-u^{-2}|$ attains its maximum value at $\pm
\rho i$ and its minimum value at $\pm \rho $, which gives us \eqref{eq:maxovercircle}.
\end{proof}

\section{Concluding remarks}
In this paper, we have investigated several basic properties of
Jacobi polynomials on the Bernstein ellipse, which include the
explicit formula, extrema of the absolute values as well as a
refined asymptotic estimate. These results provide some further
insight into Jacobi polynomials and can be adaptable to some
practical applications such as establishing an explicit error bound
of the spectral interpolation at the Jacobi nodes.




\section*{Acknowledgements}
Haiyong Wang thanks the hospitality of the School of Mathematical
Sciences at Fudan university where the present research was
initiated. His work was supported by the National Natural Science
Foundation of China under grant 11671160. Lun Zhang was partially
supported by The Program for Professor of Special Appointment
(Eastern Scholar) at Shanghai Institutions of Higher Learning (No.
SHH1411007), by National Natural Science Foundation of China (No.
11501120) and by Grant EZH1411513 from Fudan University.

%
%


\end{document}